\documentclass[10pt]{article}
\textwidth= 5.00in
\textheight= 7.4in
\topmargin = 30pt
\evensidemargin=0pt
\oddsidemargin=55pt
\headsep=17pt
\parskip=.5pt
\parindent=12pt
\font\smallit=cmti10
\font\smalltt=cmtt10

\usepackage{color}
\usepackage{hyperref}
\usepackage{enumerate}
\usepackage{color}
\usepackage{breakurl}
\usepackage{comment}
\newcommand{\bburl}[1]{\textcolor{blue}{\url{#1}}}

\usepackage{amssymb,latexsym,amsmath,epsfig,amsthm} 

\makeatletter

\renewcommand\section{\@startsection {section}{1}{\z@}
{-30pt \@plus -1ex \@minus -.2ex}
{2.3ex \@plus.2ex}
{\normalfont\normalsize\bfseries\boldmath}}

\renewcommand\subsection{\@startsection{subsection}{2}{\z@}
{-3.25ex\@plus -1ex \@minus -.2ex}
{1.5ex \@plus .2ex}
{\normalfont\normalsize\bfseries\boldmath}}

\renewcommand{\@seccntformat}[1]{\csname the#1\endcsname. }

\makeatletter
\newcommand{\myitem}[1]{%
\item[#1]\protected@edef\@currentlabel{#1}%
}
\makeatother

\makeatother

\newtheorem{thm}{Theorem}[section]
\newtheorem{conj}[thm]{Conjecture}
\newtheorem{cor}[thm]{Corollary}
\newtheorem{lem}[thm]{Lemma}

\newtheorem{exa}[thm]{Example}
\newtheorem{defi}[thm]{Definition}

\newtheorem{rek}[thm]{Remark}

\newcommand{\Mod}[1]{\ \mathrm{mod}\ #1}

\begin{document}

\begin{center}
\uppercase{\bf Generalizations of a Curious Family of MSTD Sets Hidden by Interior Blocks}
\vskip 20pt
{\bf H\`ung Vi\d{\^e}t Chu}\\
{\smallit Department of Mathematics, Washington and Lee University, Lexington, VA 24450}\\
{\tt chuh19@mail.wlu.edu}\\
\vskip 10pt
{\bf Noah Luntzlara}\\
{\smallit Department of Mathematics, University of Michigan, Ann Arbor, MI 48109}\\
{\tt nluntzla@umich.edu}\\
\vskip 10pt
{\bf Steven J. Miller}\\
{\smallit Department of Mathematics and Statistics, Williams College, Williamstown, MA 01267}\\
{\tt sjm1@williams.edu}\\
\vskip 10pt
{\bf Lily Shao}\\
{\smallit Department of Mathematics and Statistics, Williams College, Williamstown, MA 01267}\\
{\tt ls12@williams.edu}\\
\end{center}
\vskip 20pt
\centerline{\smallit Received: , Revised: , Accepted: , Published: } 
\vskip 30pt

\centerline{\bf Abstract}
\noindent
A set $A$ is MSTD (more-sum-than-difference) or sum-dominant if $|A+A|>|A-A|$, and is RSD (restricted-sum dominant) if $|A\hat{+}A|>|A-A|$, where $A\hat{+}A$ is the set of sums of distinct elements in $A$. We study an interesting family of MSTD sets that have appeared many times in the literature (see the works of Hegarty, Martin and O'Bryant, and Penman and Wells). While these sets seem at first glance to be ad hoc, looking at them in the right way reveals a nice common structure. In particular, instead of viewing them as explicitly written sets, we write them in terms of differences between two consecutive numbers in increasing order. We denote this family by $\mathcal{F}$ and investigate many of its properties. Using $\mathcal{F}$, we are able to generate many sets $A$ with high value of $\log|A+A|/\log|A-A|$, construct sets $A$ with a fixed $|A+A|-|A-A|$ more economically than previous authors, and improve the lower bound on the proportion of RSD subsets of $\{0,1,2,\dots,n-1\}$ to about $10^{-25}$ (the previous best bound was $10^{-37}$). Lastly, by exhaustive computer search, we find six RSD sets with cardinality $15$, which is one lower than the smallest cardinality found to date, and find that $30$ is the smallest diameter of RSD sets.

\pagestyle{myheadings}
\markright{\smalltt INTEGERS: 19 (2019)\hfill}
\thispagestyle{empty}
\baselineskip=12.875pt
\vskip 30pt

\section{Introduction}

\subsection{Background}
Given a finite set of
non-negative integers $A$, the \textit{sum set} is defined to be
\[
A+A := \{a_i+a_j: a_i,a_j\in
A\}
\]
and the \textit{difference set} to be
\[
A-A := \{a_i-a_j:a_i,a_j\in A\};
\]
$A$ is said to be
\textit{sum-dominated} or \textit{MSTD} (more sums than differences) if $|A+A|>|A-A|$, \textit{balanced} if $|A+A|=|A-A|$, and
\textit{difference-dominated} if $|A+A|<|A-A|$. Also, we define the \textit{restricted sum set} to be
\[
A\hat{+}A: =\{a_i+a_j:a_i, a_j\in A\mbox{ and }a_i\neq a_j\}.
\]
We call a set $A$ \textit{restricted sum-dominant (RSD)} if $|A\hat{+}A|>|A-A|$. We could similarly define a restricted difference set by only considering differences of distinct elements, but this would amount to removing the number $0$ from the difference set, decreasing the cardinality of $A-A$ by one and not substantially changing the questions about RSD sets. Thus we avoid this definition.

Since Conway gave an early example of an MSTD set in 1969\footnote{see footnote 1 of \cite{Na2}}, research on MSTDs has made incredible progress; see \cite{He,Ma,Na1,Na2,Ru1,Ru2,Ru3} for some of the earlier results and constructions. One of the most notable papers is by Martin and O'Bryant \cite{MO}. They proved the proportion of MSTD subsets of $\{0,1,2,\dots,n-1\}$ is bounded below by a positive constant as $n\rightarrow \infty$. However, the proof is probabilistic and does not give explicit constructions of MSTD sets. Later, Miller, Orosz and Scheinerman \cite{MOS} gave an explicit construction of a dense family of MSTD sets (previous bounds were exponentially small). They showed that as $n\rightarrow \infty$, the proportion of MSTD subsets of $\{0,1,2,\dots,n-1\}$ that are in their family is at least $C/n^4$ for some constant $C$.\footnote{With a more refined analysis, the density can be improved to $C/n^2$.} The current record of a dense family belongs to Zhao \cite{Zh1} with a family of density $C/n$.

In this paper, we focus on a particular family of MSTD sets (which we denote by $\mathcal{F}$) that has appeared many times in the literature. These sets appear to arise at random and have no particular order, but if we look at them in the right way, they are very well-structured. In addition, our family $\mathcal{F}$ has many nice properties that we will explore, despite not being dense.

We first provide some examples of sets in $\mathcal{F}$ that have been discussed in the literature. The following sets are found in \cite{MO}:
\begin{align*}
    S_2\ &=\ \{0, 1, 2, 4, 5, 9, 12, 13, 14, 16, 17\},\\
    S_4\ &=\ \{0, 1, 2, 4, 5, 9, 12, 13, 17, 20, 21, 22, 24, 25\}.
\end{align*}
And these sets below are found in \cite{He}:
\begin{align*}
    A_4\ &=\ \{0, 1, 2, 4, 5, 9, 12, 13, 14\},\\
    A_{12}\ &=\ S_2, \\ 
    A_{15}\ &= \ \{0, 1, 2, 4, 5, 9, 12, 13, 17, 20, 21, 22, 24, 25, 29, 32, 33, 37, 40, 41, 42, 44, 45\}.
\end{align*}
Last but not least, the following sets are found in \cite{PW}:
\begin{align*}
    T'_j\ &= \ \{0,2\}\cup\{1,9,\ldots,1+8j\}\cup\{4,12,\ldots,4+8j\}\\
    &\cup\{5,13,\ldots,5+8j\}\cup\{6+8j, 8(j+1)\}\mbox{ (Theorem 1)},\\
    T_j\ &= \ \{0,2\}\cup\{1,9,\ldots,1+8(j+1)\}\cup\{4,12,\ldots,4+8j\}\\
    &\cup\{5,13,\ldots,5+8j\}\cup\{6+8j, 8(j+1)\}\mbox{ (Theorem 4)},\\
    R_j\ &= \ \{1,4\}\cup\{0,12,\ldots,12j\}\cup\{2,14,\ldots, 2+12j\}\\
    &\cup\{7,19,\ldots,7+12j\}\cup\{8,20,\ldots,8+12j\}\cup\{3+12j,6+12j\}\mbox{ (Theorem 6)}.
\end{align*}
These sets play important roles in the papers which initially described them. For example, $A_{12}$ is used to prove Theorem 8 in \cite{He}, which states that there exists a positive constant lower bound for the proportion of sets with fixed cardinalities of sum sets and difference sets. The sets $T_j$ and $T'_j$ give explicit construction of RSD sets. The set $R_j$ gives a set $A$ with the highest known value of $\log|A+A|/\log |A-A|$. The study of our family $\mathcal{F}$ was motivated by trying to describe a common pattern among these remarkable MSTD sets. Although they arose in somewhat different situations, and were presented ad hoc for the purposes of each of the papers which described them, it turned out that all these sets have something in common, and all belong to $\mathcal{F}$.

The members of the family $\mathcal{F}$ have many nice properties, including (1) sets $A$ with large values of $\log |A+A|/\log |A-A|$\footnote{The highest value of $\log|A+A|/\log|A-A|$ for a set $A$ in $\mathcal{F}$ is about 1.03059.}; (2) economical construction of sets $A$ with fixed $|A+A|-|A-A|$; (3) demonstration of Spohn's conjecture (1973); (4) compactness; (5) more constructions of RSD subsets; and (5) examples of small fringes.

\subsection{Notation and Main Results}\label{not}
Let nonnegative numbers $a\le b$ be chosen. Let $[a,b]=\{x\,|\,a\le x\le b\}$ and $[a,b]_q=\{x\,|\,x\equiv a\pmod{q}\mbox{ and }a\le x\le b\}$.

We use a different notation to represent sets of integers; it was first introduced by Spohn \cite{Sp} (1973): Given a set $S=\{a_1,a_2,\ldots,a_n\}$, we arrange its elements in increasing order and form the sequence of differences between consecutive elements. Suppose that $a_1<a_2<\cdots<a_n$, then our sequence is $a_2-a_1,a_3-a_2,a_4-a_3,\ldots,a_n-a_{n-1}$ and we represent $$S\ =\ (a_1\,|\,a_2-a_1,a_3-a_2,a_4-a_3,\ldots,a_n-a_{n-1}).$$

For example, if $S=\{3,2,5,10,9\}$, we would arrange the elements in increasing order to get $2<3<5<9<10$, then write $S=(2\,|\,1,2,4,1)$.

We call the sequence $a_2-a_1, a_3-a_2, a_4-a_3,\dots,a_n-a_{n-1}$ the \textit{sequence of consecutive differences (SCD)}. The advantage of this notation is that differences between elements of $S$ correspond to sums of consecutive runs in the SCD. For example, look at the SCD $1,2,4,1$. We know that $7$ is in the difference set because the run $1,2,4$ sums up to $7$.

We now define the \textit{$\mathcal{F}$ family} and \textit{interior blocks}.

\begin{defi}
Let $M^k$ denote the sequence $1,\underbrace{4,\ldots,4}_{k\text{-times}},3$. We define $\mathcal{F}$ to be the family of sets with SCD
$$1,1,2,1,M^{k_1},M^{k_2},\ldots,M^{k_\ell},M_1$$
where $\ell, k_1,\ldots, k_\ell$ are positive integers, and $M_1$ is either $1,1$ or $1,1,2$ or $1,1,2,1$.
\end{defi}

\begin{rek}
It can be verified that all the sets $S_2, S_4, A_4, A_{12}, A_{15}, T_j', T_j, R_j$ are in $\mathcal{F}$.
\end{rek}

\begin{conj}\label{conjec}
All sets in $\mathcal{F}$ are MSTD.
\end{conj}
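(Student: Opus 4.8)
The plan is to recast the MSTD condition as an inequality between the number of ``missing'' sums and differences, and then to show that for a set in $\mathcal{F}$ these holes are few and occur only in controlled places. First I would translate so that $\min A = 0$ and set $N = \max A$, which equals the sum of all entries of the SCD. Writing $h_-$ for the number of integers in $[1,N]$ that do \emph{not} occur in $A-A$ and $h_+$ for the number of integers in $[0,2N]$ that do not occur in $A+A$, the symmetry of the difference set gives $|A-A| = 2N+1-2h_-$ and $|A+A| = 2N+1-h_+$, so that
\[
|A+A|-|A-A| \;=\; 2h_- - h_+ .
\]
Thus Conjecture~\ref{conjec} is equivalent to the single inequality $h_+ < 2h_-$, and the whole problem becomes one of counting holes.

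The tool for this is the dictionary already recorded in the excerpt: a positive integer $d$ lies in $A-A$ exactly when $d$ is the sum of a contiguous run of the SCD. For the sum set I would use the companion description obtained from the reflection $s \mapsto 2N-s$, which sends $a_i+a_j$ to $(N-a_i)+(N-a_j)$ and hence identifies the top of $A+A$ with the bottom of the sumset of the set whose SCD is the \emph{reverse} of the original; combined with a direct analysis of the low sums this reduces $h_+$ to the behaviour of the two ends $1,1,2,1$ and $M_1$. The heart of the argument is an \textbf{interior-saturation lemma}: there is an absolute constant $B$ (independent of $\ell$ and of the $k_i$) such that every positive difference in $[B,N-B]$ and every sum in $[B,2N-B]$ actually occurs. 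The mechanism is that a run of consecutive $4$'s, flanked by the small entries $1,2,3$, lets contiguous run-sums realize every residue class modulo $4$ and then every integer across a long central band, and that these bands overlap from one block to the next so that no gap opens at a junction. Granting this lemma, all holes are confined to the two fringes together with one bounded neighbourhood per block boundary, so both $h_-$ and $h_+$ become finite sums of contributions that I would compute block by block.

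With the holes localized, I would produce explicit formulas (or at least sharp upper bounds for $h_+$ and lower bounds for $h_-$) as functions of $\ell$, the $k_i$, and the three choices of $M_1$, and then verify $h_+ < 2h_-$ in each case. The small cases already indicate the shape of the answer: $A_4$ has $(h_-,h_+)=(1,1)$, while $S_2$ has $(h_-,h_+)=(1,0)$, and lengthening or appending blocks raises both counts roughly in step while always keeping $h_+$ strictly below $2h_-$. A convenient way to make the bookkeeping uniform in the $k_i$ is to read the SCD left to right as a finite automaton whose state records which target values in a bounded sliding window are currently reachable as run-sums; since the entries are bounded and a run of $4$'s is a single repeated transition, the reachable set is eventually periodic, which both proves the saturation lemma and exhibits the holes as a periodic pattern that can be counted.

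The step I expect to be the main obstacle is exactly this uniform control of the interior and, within it, the junction analysis. Whether a value near the meeting point of two $4$-runs (or of a run and a fringe) is representable as a contiguous run-sum depends delicately on how the interspersed entries $3,1,1,2,1$ sit between the runs, and one must rule out \emph{accidental} extra holes for every pair of adjacent block lengths and for each $M_1$ simultaneously, rather than merely checking finitely many small configurations. Making the residue-covering and junction arguments genuinely uniform in $\ell$ and in all the $k_i$, so that the hole count is pinned down for the entire family at once, is the crux, and is presumably why the statement is offered here as a conjecture rather than a theorem.
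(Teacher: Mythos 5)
First, note that the statement you were asked to prove is Conjecture~\ref{conjec}, which the paper itself leaves open: the authors prove it only for the periodic subfamily $\mathcal{F}_{per}$ (Theorem~\ref{periodic}), where all interior blocks have a common length $k$. So there is no full proof in the paper to compare against, and your proposal does not close the gap either --- you concede at the end that the uniform junction analysis over mixed block lengths $k_1,\dots,k_\ell$ is unresolved, which is exactly the obstruction that keeps this a conjecture. Your opening reduction is fine and matches the paper's strategy in spirit: with $N=\max A$, the identity $|A+A|-|A-A| = 2h_- - h_+$ is correct, and the paper's proof of Theorem~\ref{periodic} is precisely an exact determination of the missing differences and missing sums.

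The fatal problem is your ``interior-saturation lemma,'' which is false for this family, and the paper's own lemmas show it. You claim there is an absolute constant $B$, independent of $\ell$ and the $k_i$, such that every difference in $[B,N-B]$ and every sum in $[B,2N-B]$ occurs, so that all holes are confined to the fringes plus bounded neighbourhoods of block boundaries. But by Lemma~\ref{diffset}, the missing positive differences of $S_{k,\ell}$ form $\ell$ arithmetic progressions of $k$ terms each, spread essentially uniformly across the whole interval: for $S_{2,3}$ (with $N=45$) the difference set misses $\pm\{6,10,18,22,30,34\}$, and for large $\ell$ there are holes arbitrarily far from both ends and from any bounded set of ``junction neighbourhoods'' of fixed size. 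Likewise, Lemmas~\ref{sumset1} and~\ref{sumset2} show the sum set misses $2\ell$ progressions of $k-1$ terms each (for $S_{2,3}$: $\{12,24,36,52,64,76\}$ inside $[0,90]$). Thus $h_-=k\ell$ and $h_+=2(k-1)\ell$ both grow linearly with the diameter; the holes are numerous and unconfined, and MSTD-ness comes not from scarcity of holes but from the exact cancellation $2h_--h_+=2k\ell-2(k-1)\ell=2\ell>0$. Any viable argument must therefore count two linearly growing families of interior holes exactly --- which the paper accomplishes for the periodic case via the run-form classification of Lemma~\ref{forms} --- and it is precisely the extension of that exact bookkeeping to arbitrary mixed block lengths, where adjacent blocks of different sizes $k_i$ interact, that remains open. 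A proof built on your saturation lemma would collapse at its first step.
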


\begin{exa} The set
\begin{align*}S\ &=\ (0\,|\,1,1,2,1,4,3,1,4,4,3,1,4,4,4,3,1,1,2,1)\\&= \ \{0,1,2,4,5,9,12,13,17,21,24,25,29,33,37,40,41,42,44,45\}\end{align*} has $|S+S|-|S-S|=86-83=3$.
\end{exa}

In this paper, we prove that the conjecture is true for a periodic subfamily of $\mathcal{F}$.

\begin{defi}\label{interdef}[Interior block]
Consider a set $S$ with its SCD. Let $B$ denote a consecutive subsequence of the SCD. Suppose there exists $N\in\mathbb{N}$ such that for $k\ge N$, the sets $S_k$ with SCD constructed by repeating $B$ for $k$ times in place of $B$ are MSTD sets. Then we call $B$ an interior block and let $|B|$ denote the length of the interior block.
\end{defi}

A natural question to ask is what are the possible values of $|B|$, which is addressed by the following theorem:

\begin{thm}\label{periodic}
Let $k$ and $\ell$ be arbitrary positive integers. The following three subfamilies of $\mathcal{F}$ consist of MSTD sets; we specify the exact values of $|S+S|-|S-S|$.
\begin{enumerate}
\item \label{firsti}
The family $\{S_{k,\ell}\,|\,k,\ell\in \mathbb{N}\}$, where \begin{align*}
    S_{k,\ell}=(0\,|\,1,1,2,
    \underbrace{1,{\underbrace{4,\ldots,4}_{k\text{-times}},3},1,{\underbrace{4,\ldots,4}_{k\text{-times}},3},\ldots{},
    {1,\underbrace{4,\ldots,4}_{k\text{-times}}},3}_{\ell\text{-times}},1,1,2,1)
\end{align*}
has $|S_{k,\ell}+S_{k,\ell}|-|S_{k,\ell}-S_{k,\ell}|=2\ell$.
\item \label{secondi}
The family $\{S'_{k,\ell}\,|\,k,\ell\in \mathbb{N}\}$, where\begin{align*}
    S'_{k,\ell}=(0\,|\,1,1,2,
    \underbrace{1,{\underbrace{4,\ldots,4}_{k\text{-times}},3},1,{\underbrace{4,\ldots,4}_{k\text{-times}},3},\ldots{},
    {1,\underbrace{4,\ldots,4}_{k\text{-times}}},3}_{\ell\text{-times}},1,1,2)
\end{align*}
has $|S'_{k,\ell}+S'_{k,\ell}|-|S'_{k,\ell}-S'_{k,\ell}|=2\ell-1$.
\item \label{thirdi}
The family $\{S''_{k,\ell}\,|\,k,\ell\in \mathbb{N}\}$, where \begin{align*}
    S''_{k,\ell}=(0\,|\,1,1,2,
    \underbrace{1,{\underbrace{4,\ldots,4}_{k\text{-times}},3},1,{\underbrace{4,\ldots,4}_{k\text{-times}},3},\ldots{},
    {1,\underbrace{4,\ldots,4}_{k\text{-times}}},3}_{\ell\text{-times}},1,1)
\end{align*}
has $|S''_{k,\ell}+S''_{k,\ell}|-|S''_{k,\ell}-S''_{k,\ell}|=\ell$.
\end{enumerate}
We call $\{S_{k,\ell}|k,\ell\in\mathbb{N}\}\cup \{S'_{k,\ell}|k,\ell\in\mathbb{N}\}\cup \{S''_{k,\ell}|k,\ell\in\mathbb{N}\}$ the $\mathcal{F}_{per}$ family, which is a periodic subfamily of the larger family $\mathcal{F}$.
\end{thm}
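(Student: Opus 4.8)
The plan is to compute $|A+A|$ and $|A-A|$ separately for a generic member of each family and to show that, although each of these two quantities depends on both $k$ and $\ell$, their difference does not depend on $k$. The first step is bookkeeping: convert each SCD into an explicit list of elements. Writing $a_1=0$ and letting $D$ be the diameter, one checks that all three families share the left fringe $\{0,1,2,4\}$ and the same ``middle,'' namely the arithmetic progression $\{5,9,13,\dots\}$ of residue $1\bmod 4$ interrupted by one boundary element $\equiv 0\bmod 4$ at the end of each of the $\ell$ repeated blocks; the families differ only in the right fringe determined by the suffix $M_1\in\{1,1;\ 1,1,2;\ 1,1,2,1\}$. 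I record $D$ in each case and reduce the theorem to counting ``holes'': if $A+A$ omits exactly $p$ integers of $[0,2D]$ and the positive part of $A-A$ omits exactly $q$ integers of $[1,D]$, then since $A-A$ is symmetric and contains $0$,
\[
|A+A|-|A-A|\;=\;(2D+1-p)-(2D+1-2q)\;=\;2q-p,
\]
so everything comes down to identifying the two hole sets.

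For the difference set I would use the SCD description directly: a positive integer lies in $A-A$ iff it is the sum of a consecutive run of the SCD. The only obstruction to realizing a given value is a run of $4$'s: once a run enters a maximal block $\underbrace{4,\dots,4}_{k}$ it can change its total only in steps of $4$, and the flanking entries $1$ and $3$ force it to ``jump over'' certain residue-$2\bmod 4$ values. Making this precise shows that the missed values form a short, explicitly described cluster of size $k$ per block, giving $q=k\ell$ positive holes; the suffix affects only the top of the range $[1,D]$ and can be checked not to create new interior holes.

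The sum set is the heart of the argument and the step I expect to be hardest. Here I would split $A=L\sqcup M\sqcup R$ into left fringe, middle, and right fringe, and analyze the pieces $L+L,\ L+M,\ M+M,\ M+R,\ R+R$ (and $L+R$) separately. The middle, being a near-complete residue-$1$ progression together with its boundary points, makes $M+M$ and the cross terms cover a long central interval; the delicate part is the two ends, where only the sparse residue-$0,2,3\bmod 4$ sums are available, so a value is missed exactly when the fringe elements cannot reach it in the required residue class. One must argue both directions carefully: that every claimed hole is genuinely unrepresentable (no fringe-fringe or fringe-middle pair hits it) and that every other value is represented. Carrying this out yields $p=2(k-1)\ell$ for the $S_{k,\ell}$ family, with the truncated suffixes adding exactly $1$ extra hole (for $S'_{k,\ell}$) or $\ell$ extra holes (for $S''_{k,\ell}$) near the top of $[0,2D]$.

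Finally I would assemble the count: $2q-p$ equals $2k\ell-2(k-1)\ell=2\ell$, then $2\ell-1$, and then $2k\ell-(2k-1)\ell=\ell$ in the three cases, exactly as claimed, and in particular independent of $k$ --- the ``interior block'' phenomenon. As an organizing device one can phrase the computation inductively: incrementing $\ell$ inserts a block that adds $k$ difference-holes and $2(k-1)$ sum-holes (net $+2$ to $2q-p$), while incrementing $k$ adds $\ell$ difference-holes and $2\ell$ sum-holes (net $0$), reducing the theorem to the three base cases $S_2$, $S'_{1,1}=\{0,1,2,4,5,9,12,13,14,16\}$, and $A_4$. The main obstacle throughout is the sum-set hole count: unlike the difference set it has no clean run-sum description, so the representability of each boundary value must be tracked by hand through the $\bmod\,4$ structure, uniformly in $k$ and $\ell$.
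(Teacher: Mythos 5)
Your proposal is correct and takes essentially the same approach as the paper: the paper likewise determines $S-S$ and $S+S$ exactly as full intervals minus explicitly described hole sets via the mod-$4$ structure (a run-classification lemma for the difference set, and a fringe-plus-progression analysis for the sum set), obtaining $k\ell$ missing positive differences and $2(k-1)\ell$ missing sums for $S_{k,\ell}$ --- precisely your $q$ and $p$ --- and concludes $|S+S|-|S-S|=2\ell$ by the same count. The differences are organizational only (your $2q-p$ bookkeeping, unified treatment of the three suffixes, and inductive framing), and the case analysis you defer as ``the hardest step'' is exactly what the paper's lemmas carry out.
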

\begin{exa}
\begin{enumerate}
    \item The set \begin{align*} S_{2,3}\ &=\ (0\,|\,1,1,2,1,4,4,3,1,4,4,3,1,4,4,3,1,1,2,1)\\&=\ \{0,1,2,4,5,9,13,16,17,21,25,28,29,33,37,40,41,42,44,45\}\end{align*}
    has $|S_{2,3}+S_{2,3}|-|S_{2,3}-S_{2,3}|=85-79=6=2\cdot 3$.
    \item The set \begin{align*} S'_{3,2}\ &=\ (0\,|\,1,1,2,1,4,4,4,3,1,4,4,4,3,1,1,2)\\&=\
    \{0,1,2,4,5,9,13,17,20,21,25,29,33,36,37,38,40\} \end{align*}
    has $|S'_{3,2}+S'_{3,2}|-|S'_{3,2}-S'_{3,2}|=72-69=3=2\cdot 2-1$.
    \item The set \begin{align*}S''_{3,2}\ &=\ (0\,|\,1,1,2,1,4,4,4,3,1,4,4,4,3,1,1)\\ &=\
    \{0,1,2,4,5,9,13,17,20,21,25,29,33,36,37,38\}\end{align*}
    has $|S''_{3,2}+S''_{3,2}|-|S''_{3,2}-S''_{3,2}|=67-65=2$.
\end{enumerate}
\end{exa}
\begin{rek}
Theorem \ref{periodic} answers a question raised by Spohn \cite{Sp}, on whether the interior block must contain at least 3 elements. The answer is no: set $\ell=1$ and choose $k\ge 2$ for sets in Theorem \ref{periodic}. Then, we can use either the interior block $4$ or $4,4$. In other words, we have shown that for all $i\in \mathbb{N}$, there exists an interior block $B$ with $|B|=i$. Also, this theorem provides an infinite family of sets which demonstrate Conjecture 6 in \cite{Sp}.\footnote{The repetition of certain interior blocks can cause the number of sums to be increased by a greater constant than that by which the number of differences is increased.}
\end{rek}

There has been a lot of interest in finding sets $A$ with large values of the ratio $\log |A+A|/\log |A-A|$. An early high ratio of about $1.0208$ was given by Hegarty \cite{He}, and later, a higher ratio of about $1.02313$ was found by Asada et al.\ \cite{AMMS}. The current highest is about $1.03059$ found by Penman and Wells in \cite{PW}, which is much higher than previous results. We observe that both examples of sets $A$ with high ratios from \cite{He} and \cite{PW} belong to $\mathcal{F}$. We offer examples of several sets in $\mathcal{F}$ that give higher ratios than the ones given in \cite{AMMS} and  \cite{He}; there are at least 22 sets $A$ in $\mathcal{F}$ with $\log |A+A|/\log |A-A|>1.3$.

Furthermore, the family $\mathcal{F}$ gives an economical way (in the sense of having a relatively small width between its minimum and maximum element) to construct a set $A$ with any specific value of $|A+A|-|A-A|$. Martin and O'Bryant proved that for a given $x\in\mathbb{N}$, there exists a set $A\subseteq \left[0,17|x|\right]$ such that $|A+A|-|A-A|=x$, which is significantly more efficient than the base expansion method.\footnote{We can generate an infinite family of MSTD sets from a given MSTD set through the base expansion method. Let $A$ be an MSTD set, and let $A_{k,m}=\{\sum_{i=1}^{k}a_im^{i-1}:a_i\in A\}$. If $m$ is sufficiently large, then $|A_{k,m}\pm A_{k,m}| = |A\pm
A|^k$ and $|A_{k,m}|=|A|^k$.} With subfamilies of $\mathcal{F}$, we further improve this.

\begin{thm}\label{econ}
Given $x\in\mathbb{N}$, there exists a set $A\subseteq [0,12+4x]$ such that $|A+A|-|A-A|=x$. Furthermore, it is impossible to construct $A\subseteq [0,f(x)]$ such that $|A+A|-|A-A|=x$, where $f(x)$ is sub-linear. This means that a linear growth of the interval containing $A$ is the best we can do.
\end{thm}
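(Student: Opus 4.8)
The plan is to prove the two parts of Theorem~\ref{econ} separately: an explicit construction for the upper bound, and a counting lower bound on the achievable width.

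\textbf{Construction for the upper bound.} First I would reduce to exhibiting, for each target value $x\in\mathbb{N}$, a set $A$ inside an interval of length $12+4x$ realizing $|A+A|-|A-A|=x$. The natural source is the $\mathcal{F}_{per}$ family of Theorem~\ref{periodic}, whose members have prescribed values of $|S+S|-|S-S|$. Taking $k=1$ minimizes the width contributed by each $M^k=1,4,3$ block, so I would work with the smallest-width representatives. Concretely, for even $x=2\ell$ I would use $S_{1,\ell}$, and for odd $x=2\ell-1$ I would use $S'_{1,\ell}$; by Theorem~\ref{periodic} these have $|S+S|-|S-S|$ equal to $2\ell$ and $2\ell-1$ respectively, covering every positive integer $x$. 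The remaining task is a width computation: the diameter of a set given by its SCD is just the sum of the SCD entries, so I would sum the entries $1,1,2,1$ at the front, the $M_1$ tail, and $\ell$ copies of the block $1,4,3$ (of sum $8$) with the interleaving separators, and check that the total is at most $12+4x$. This is a routine arithmetic verification once the family is fixed, so I would state the relevant member explicitly and compute its diameter in one line.

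\textbf{Lower bound on the width.} The second assertion—that no sublinear $f(x)$ suffices—is the more conceptual half, and I expect it to be the main obstacle. The key inequality I would establish is that for any finite set $A$ of integers, $|A+A|-|A-A|$ is bounded above by a constant times the diameter $\mathrm{diam}(A)=\max A-\min A$. Both $A+A$ and $A-A$ live inside intervals of length $2\,\mathrm{diam}(A)$, so trivially $|A+A|\le 2\,\mathrm{diam}(A)+1$ and $|A-A|\ge \mathrm{diam}(A)+1$ (the latter because $A-A$ is symmetric and contains $0,\pm\mathrm{diam}(A)$ together with many intermediate differences; more simply, $|A-A|\ge 2|A|-1$ and $A-A$ contains the full range of gaps). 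Combining these, $|A+A|-|A-A|\le 2\,\mathrm{diam}(A)+1-(\mathrm{diam}(A)+1)=\mathrm{diam}(A)$, or after sharper bookkeeping a clean linear bound of the form $|A+A|-|A-A|\le c\cdot\mathrm{diam}(A)$. Consequently, if $A\subseteq[0,f(x)]$ realizes $|A+A|-|A-A|=x$, then $x\le c\,f(x)$, forcing $f(x)\ge x/c$, which rules out any sublinear $f$.

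The main obstacle is pinning down the sharp linear bound $|A+A|-|A-A|\le c\,\mathrm{diam}(A)$ with the right constant and a fully rigorous lower bound on $|A-A|$; the upper bound on $|A+A|$ is immediate from the containment in an interval, but lower-bounding $|A-A|$ in terms of the diameter requires care, since a sparse set can have small difference set relative to its diameter. I would handle this by arguing that both sumset and difference set are contained in, and the difference set is not too sparse within, the interval $[-\mathrm{diam}(A),\mathrm{diam}(A)]$, so that the gap $|A+A|-|A-A|$ cannot exceed the interval's length. Once this structural inequality is in hand, the sublinearity obstruction follows immediately, and the two halves together establish that linear growth of the containing interval is optimal.
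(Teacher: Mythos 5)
Your construction half is exactly the paper's proof: for even $x=2\ell$ take $S_{1,\ell}$ (diameter $9+8\ell=9+4x$) and for odd $x=2\ell-1$ take $S'_{1,\ell}$ (diameter $8+8\ell=12+4x$), both supplied by Theorem \ref{periodic}; the width check is the routine sum of SCD entries you describe. No issues there.

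The lower-bound half, however, passes through a false intermediate claim. You assert $|A-A|\ge \mathrm{diam}(A)+1$, justified by the symmetry of $A-A$ and by $|A-A|\ge 2|A|-1$; this is wrong: for $A=\{0,N\}$ we have $\mathrm{diam}(A)=N$ but $|A-A|=3$, and in general a sparse set has $|A-A|$ far smaller than its diameter, as you yourself concede later. Your proposed repair --- showing that ``the difference set is not too sparse within the interval'' --- cannot work for the same reason: there is no lower bound on $|A-A|$ in terms of $\mathrm{diam}(A)$ beyond the trivial $|A-A|\ge 1$. The good news is that no such bound is needed. Since $|A-A|\ge 1>0$, you may simply discard that term, giving
\[
|A+A|-|A-A|\ \le\ |A+A|\ \le\ 2\,\mathrm{diam}(A)+1,
\]
which is already the linear bound your contradiction argument requires. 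This is precisely what the paper's remark does: if $A_x\subseteq[0,\phi(x)]$ realizes $|A_x+A_x|-|A_x-A_x|=x$, then
\[
\frac{|A_x+A_x|-|A_x-A_x|}{\phi(x)}\ \le\ \frac{|A_x+A_x|}{\phi(x)}\ <\ \frac{2\phi(x)+1}{\phi(x)}\ \le\ 3,
\]
contradicting $x/\phi(x)\to\infty$ for sublinear $\phi$. So the ``main obstacle'' you identify --- lower-bounding $|A-A|$ --- is a phantom; once you delete that step, your argument coincides with the paper's.
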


Finally, we improve the lower bound for the proportion of RSD subsets of $[0,n-1]$ as $n$ goes to infinity. RSD implies MSTD, and compared to MSTD sets, RSD sets are much less common: exhaustive computer search shows that there are no RSD subsets of $[0,29]$, while there are at least $4.5\cdot 10^{5}$ MSTD sets in the same interval. In \cite{PW}, the lower bound on the proportion of RSD subsets of $[0,n-1]$ as $n\to \infty$ is about $10^{-37}$; we improve this bound to $4.135\cdot 10^{-25}$ by a better fringe formed by using $\mathcal{F}$.\footnote{There are exactly 6 RSD subsets of $[0,30]$ and 16 RSD subsets of $[0,31]$. Based on these observations, we predict the true proportion of RSD subsets of $[0,n-1]$ as $n\rightarrow\infty$ to be about $3\cdot 10^{-9}$.}

\begin{thm}\label{boundsforRSD}
For $n\ge 81$, the proportion of RSD subsets of $[0,n-1]$ is at least $4.135\cdot 10^{-25}$.
\end{thm}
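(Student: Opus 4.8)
The plan is to use the fringe method of Martin and O'Bryant \cite{MO}, refined by Miller, Orosz and Scheinerman \cite{MOS} and Zhao \cite{Zh1}, with the essential new input being a compact RSD fringe extracted from $\mathcal{F}$. Write a subset $A\subseteq[0,n-1]$ as $A=L\cup M\cup R$, where $L=A\cap[0,w-1]$ is the left fringe, $R=A\cap[n-w,n-1]$ is the right fringe, and $M=A\cap[w,n-w-1]$ is the free middle, for a fixed fringe width $w$ independent of $n$. The entire improvement over \cite{PW} should come from replacing their fringe by a more economical one: because the members of $\mathcal{F}$ (and in particular the very compact RSD sets we locate by computer search) pack a large value of $|A\hat{+}A|-|A-A|$ into a short interval, the same guaranteed surplus can be certified from fewer forced fringe positions, and it is exactly $2^{-2w}$ that the final constant is most sensitive to. The numerology is suggestive here: $4.135\cdot10^{-25}\approx 2^{-81}$ and the hypothesis $n\ge 81$ is precisely the threshold at which two fringe windows of width $40$ become disjoint and leave a nonempty middle.

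First I would establish a \emph{fringe lemma}. Fix good fringes $L_0,R_0$ obtained by cutting a suitable RSD member of $\mathcal{F}$ into its low and high ends, together with short all-filled buffer intervals adjacent to the middle. The key observation, which keeps the probabilistic part one-sided, is that to make $A$ RSD we do \emph{not} need both interiors full: we need only a lower bound on $|A\hat{+}A|$ and an upper bound on $|A-A|$. Conditioned on $L=L_0$ and $R=R_0$, whenever the restricted sum set omits no value in its interior range $[s_0,\,2(n-1)-s_0]$, one has $|A\hat{+}A|=(2n-1)-\delta_+$ exactly, where $\delta_+$ counts the omissions in the two sum-fringe windows and is determined by $L_0+L_0$, $R_0+R_0$, $L_0+R_0$ (with the restricted-sum correction affecting only the extreme sums $2\min A$ and $2\max A$). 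Meanwhile the large-magnitude differences near $\pm(n-1)$ force $|A-A|\le(2n-1)-\delta_-$, where $\delta_-$ is determined by $R_0-L_0$ alone, and any interior holes of $A-A$ only decrease $|A-A|$ further. Hence every admissible $A$ satisfies $|A\hat{+}A|-|A-A|\ge\delta_--\delta_+$, so choosing $(L_0,R_0)$ with $\delta_->\delta_+$ makes $A$ robustly RSD.

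Next I would show the restricted sum set fills its interior with probability bounded below independently of $n$. Taking $M$ uniform on $[w,n-w-1]$, a value $v$ at distance $t$ from the end of the interior admits $\gtrsim t$ essentially disjoint representations $v=a+a'$ with distinct $a,a'$, each realized with probability $\tfrac14$, so $\Pr[v\notin A\hat{+}A]\le(3/4)^{ct}$; summing this geometric series bounds the expected number of interior omissions by an absolute constant, whence $\Pr[\text{interior full}]\ge p_{\mathrm{fill}}>0$ with $p_{\mathrm{fill}}$ independent of $n$. The buffers forced into $L_0,R_0$ cover the finitely many near-fringe interior values deterministically, so only the genuinely deep middle needs this estimate. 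Combining with the fringe lemma, for $n\ge81$ the proportion of RSD subsets of $[0,n-1]$ is at least $\Pr[L=L_0]\cdot\Pr[R=R_0]\cdot p_{\mathrm{fill}}=2^{-2w}\,p_{\mathrm{fill}}$, and a direct computation with the chosen $\mathcal{F}$-fringe makes this at least $4.135\cdot10^{-25}$.

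The main obstacle is the joint optimization inside the fringe lemma: one must exhibit an explicit pair $(L_0,R_0)$ that is simultaneously (i) robustly RSD, keeping $\delta_->\delta_+$ for \emph{every} admissible middle, and (ii) as short as possible, since $2^{-2w}$ dominates the bound. Verifying (i) demands a careful, finite but delicate accounting of exactly which sums and differences the fringe windows contribute — including the restricted-sum correction at the extreme sums — and this is where the structural description of $\mathcal{F}$ through its SCD, together with the surplus computations of Theorem \ref{periodic}, does the real work of certifying $\delta_--\delta_+>0$. By contrast, pinning down $p_{\mathrm{fill}}$ explicitly (rather than merely positively) is routine once the geometric decay constant $c$ is fixed, and the threshold $n\ge81$ is simply the point at which the two fringes no longer overlap.
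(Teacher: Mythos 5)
Your outline is structurally the same as the paper's proof, which itself just reruns the proof of Theorem 17 of \cite{PW} with a new fringe: fix both fringes deterministically, note that the large differences missed by the fringe pair can never be restored by any choice of middle, show the middle fills all needed restricted sums with probability near $1$, and multiply the fringe probability by the fill probability. You even reverse-engineered the numerology correctly ($4.135\cdot 10^{-25}\approx 2^{-81}$, and $n\ge 81$ is the disjointness threshold for the two fringe windows). Nevertheless there is a genuine gap, and it is exactly the step you set aside as ``the main obstacle'': you never exhibit the fringe pair. Every structural ingredient in your outline was already available to Penman and Wells and by itself yields only their bound of roughly $10^{-37}$; the entire mathematical content of this theorem is the explicit width-$81$ pair and the finite verification that it works. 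The paper takes $L=(0\,|\,1,1,2,1,4,3,1,4,3,1,4,3,1,4,3,1,1,1)\subseteq[0,39]$ and $U=(n-41)+(0\,|\,1,1,1,1,4,3,1,4,3,1,4,3,1,4,3,1,1,2,1)\subseteq[n-41,n-1]$, both slight modifications of the MSTD set $S_{1,5}\in\mathcal{F}_{per}$, and verifies $L\hat{+}L=[0,78]\setminus\{0,8,78\}$, $L\hat{+}U=[n-41,n+38]$, and $U\hat{+}U=[2n-82,2n-2]\setminus\{2n-82,2n-12,2n-4,2n-2\}$, while $U-L$ misses the eight values $\pm(n-12),\pm(n-20),\pm(n-28),\pm(n-36)$; hence at least $8$ differences are always absent while at most $7$ restricted sums can be absent, and following the computation in \cite{PW} one gets $(1-8(2^{-19}+2^{-20}))\cdot 2^{-(40+41)}=4.135\cdot 10^{-25}$. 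Without this explicit data neither the constant nor the threshold $n\ge 81$ is obtainable, so the proposal as written proves nothing beyond what is in \cite{PW}.

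A secondary problem is your plan to build the fringe by cutting ``a suitable RSD member of $\mathcal{F}$'' (or the compact RSD sets found by computer search) into its two ends. The paper's fringe is not built from an RSD set at all: it comes from the MSTD set $S_{1,5}$ (diameter $45$), and this is essentially forced. The six diameter-$30$ RSD sets found by exhaustive search do not lie in $\mathcal{F}$, and members of $\mathcal{F}_{per}$ become RSD only for large $\ell$, i.e., at diameters far exceeding $81$, which would destroy the $2^{-81}$ factor your bound depends on. The fringe pair does not need to come from an RSD set; it only needs the joint surplus property $\delta_->\delta_+$, which a well-chosen MSTD set supplies. Two smaller imprecisions: the fringe windows have widths $40$ and $41$, so the fringe probability is $2^{-81}$ rather than your symmetric $2^{-2w}$; and your claim that $|A\hat{+}A|=(2n-1)-\delta_+$ holds ``exactly'' should be the inequality $|A\hat{+}A|\ge(2n-1)-\delta_+$, since middle elements can also fill sums lying in the fringe windows (harmless here, as you use the bound in the correct direction).
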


\ \\

This work was supported by NSF Grants DMS1561945 and DMS1659037, the University of Michigan, Washington and Lee University, and Williams College. We thank the referee for helpful comments on an earlier draft.

\ \\


\section{Proof of Theorem \ref{periodic}}
We only prove (\ref{firsti}) for conciseness, since the proofs of (\ref{secondi}) and (\ref{thirdi}) are similar. For (\ref{firsti}), the case when $k=1$ is Theorem 4 in \cite{PW},
so we prove the case $k\ge 2$.

\begin{lem}\label{forms}
Fix $i\ge 0$ and consider $6+4i$. For some fixed $k$ and $\ell$, if there is a run that sums up to $6+4i$ in the SCD
\[1,1,2,
    \underbrace{1,{\underbrace{4,\ldots,4}_{k\text{-times}},3},\ldots{},
    {1,\underbrace{4,\ldots,4}_{k\text{-times}}},3}_{\ell\text{-times}},1,1,2,1,
    \]
then the run is one of the forms
\begin{enumerate}
\myitem{(A)}\label{item:form1} $1,1,2,\underbrace{1,{\underbrace{4,\ldots,4}_{k\text{-times}},3},\ldots{},
    {1,\underbrace{4,\ldots,4}_{k\text{-times}}},3}_{\ell\text{-times}},1,1$
\myitem{(B)}\label{item:form2} $2,\underbrace{1,{\underbrace{4,\ldots,4}_{k\text{-times}},3},\ldots{},
    {1,\underbrace{4,\ldots,4}_{k\text{-times}}},3}_{j\text{-times}}$
\myitem{(C)}\label{item:form3} $2,\underbrace{1,{\underbrace{4,\ldots,4}_{k\text{-times}},3},\ldots{},
    {1,\underbrace{4,\ldots,4}_{k\text{-times}}},3}_{\ell\text{-times}},1,1,2$
\myitem{(D)}\label{item:form4} $\underbrace{1,{\underbrace{4,\ldots,4}_{k\text{-times}},3},\ldots{},
    {1,\underbrace{4,\ldots,4}_{k\text{-times}}},3}_{j\text{-times}},1,1$ \end{enumerate}
for some $j\ge 1$.
\end{lem}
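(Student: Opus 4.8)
The plan is to work entirely modulo $4$ and exploit that every entry equal to $4$ in the SCD is invisible to the residue of a run sum. Since $6+4i\equiv 2\pmod 4$, I first reduce to a statement about the \emph{skeleton} of the SCD, namely the subsequence of entries not equal to $4$. Deleting the $4$'s, the skeleton reads $1,1,2$, then a pair $1,3$ contributed by each of the $\ell$ blocks, then $1,1,2,1$; within each block the $k$ copies of $4$ separate that block's $1$ from its $3$, and no other $4$'s occur. The key observation is that any run (a contiguous piece of the full SCD) has sum congruent mod $4$ to the sum of exactly those skeleton entries it contains, and those skeleton entries form a contiguous piece of the skeleton.

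Next I would record the partial sums $T_j$ of the skeleton modulo $4$, where $T_0=0$ and $T_j$ is the reduced sum of the first $j$ skeleton entries. A short computation shows $T_j\in\{0,1,2\}$ throughout: each block contributes $1+3\equiv 0$, so $T$ sits at $0$ just before each block, rises to $1$ after the block's $1$, and returns to $0$ after the block's $3$; the value $2$ is attained only at the second skeleton position and at the position immediately before the final $2$. A run whose skeleton part occupies skeleton positions $a+1,\dots,b$ has sum $\equiv T_b-T_a\pmod 4$, so summing to $2\pmod 4$ forces $\{T_a,T_b\}=\{0,2\}$. I would then enumerate all pairs $(a,b)$ with $a<b$ and one of $T_a,T_b$ equal to $0$ and the other to $2$.

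The step I expect to be the main obstacle, and where care is needed, is ruling out boundary ambiguity: a single skeleton piece could in principle be realized by several runs differing in how many block-$4$'s they swallow at either end, and I must show this never happens for a valid pair. Here the mod-$4$ bookkeeping pays off. A run can absorb extra $4$'s at its left end only if it begins at a block-terminal $3$, and at its right end only if it ends at a block-initial $1$; but the partial sum immediately preceding a block-terminal $3$, and the partial sum at a block-initial $1$, are both $\equiv 1\pmod 4$, which contradicts $\{T_a,T_b\}=\{0,2\}$. Hence every valid pair $(a,b)$ determines a unique run with no boundary $4$'s.

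Finally I would match the enumerated valid pairs to the four forms: the pair starting at the very beginning and ending just before the final $2$ gives \ref{item:form1}; the pairs starting at the initial $2$ and ending at a block's $3$ give \ref{item:form2}; the pair from the initial $2$ to the final $2$ gives \ref{item:form3}; and the pairs ending at the last $1,1$ and starting at a block boundary give \ref{item:form4}. The few remaining valid pairs yield runs of sum exactly $2$ (the two isolated $1,1$'s and the two lone $2$'s), which are discarded because $6+4i\ge 6$. This exhausts all possibilities and proves the lemma.
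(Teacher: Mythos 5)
Your proposal is correct, and it takes a genuinely different route from the paper's. The paper proves the lemma by a direct case analysis on where the run starts (the first $1$, the second $1$, the first $2$, a block-initial $1$, a $4$, or a $3$), with subcases on where it ends, computing each candidate sum modulo $4$ by hand; completeness comes from having iterated over all six starting positions, and several cases are dispatched tersely (``using the same argument,'' ``there are no such runs''). You instead delete the $4$'s, record prefix sums of the resulting skeleton modulo $4$, and characterize admissible runs as those whose two endpoint prefix sums are $\{0,2\}$; the one real subtlety---that a run might swallow $4$'s at either end, so that a skeleton piece does not determine the run---is exactly what your observation kills, since a block-terminal $3$ is preceded by prefix sum $1$ and a block-initial $1$ sits at prefix sum $1$, incompatible with $\{0,2\}$. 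Note that this prefix-sum bookkeeping is precisely what the paper's Cases II, V and VI assert without detail, so your argument supplies justification the paper leaves implicit. As for what each approach buys: the paper's enumeration is shorter and stays close to the concrete SCD, while yours makes completeness of the enumeration transparent (every admissible run corresponds to one of finitely many prefix-sum pairs, with the four leftover pairs being the sum-$2$ runs you correctly discard as $6+4i\ge 6$), and it generalizes with essentially no extra work to other periodic SCDs and other moduli, e.g.\ the mod-$k$ variant underlying Theorem \ref{anotherdim}.
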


\begin{proof}
We consider possible cases for where the run that sums up to $6+4i$ can start.
\begin{enumerate}
    \item \textbf{Case I}: the run starts at the first 1. Since $1+1+2+1=5<6$, the run must contain $1,1,2,1$.
    \begin{enumerate}
        \item \textbf{Subcase 1}:  If the run ends at $4$, the run sums up to $1+4m$ for some $m\ge 0$.
        \item \textbf{Subcase 2}:  If the run ends at $3$, we have $4m$ for some $m\ge 0$.
        \item \textbf{Subcase 3}:  If the run ends with $3,1$, we have $1+4m$ for some $m\ge 0$.
        \item \textbf{Subcase 4}:  If the run ends with $3,1,1$, we have $6+4m$ for some $m\ge 0$. We have form \ref{item:form1}.
        \item \textbf{Subcase 5}:  If the run ends with $3,1,1,2$, we have $4m$ for some $m\ge 0$.
        \item \textbf{Subcase 6}:  If the run ends with $3,1,1,2,1$, we have $1+4m$ for some $m\ge 0$.
    \end{enumerate}
    \item \textbf{Case II}: the run starts at the second $1$. As above, the run must contain $1,2,1$. Using the same argument,
    we see that there are no such runs that sum up to $6+4i$.
    \item \textbf{Case III}: the run starts at the first 2. As above, the run must contain $2,1$. Using the same argument, we see that to have $6+4i$, the run must either end at $3$ or end with $3,1,1,2$.
    We have form \ref{item:form2} and form \ref{item:form3}.
    \item \textbf{Case IV}: the run starts with $1,4$; it must end with $3,1,1$.
    \item \textbf{Case V}: the run starts at $4$; there are no such runs.
    \item \textbf{Case VI}: the run starts at $3$; there are no such runs.
\end{enumerate}
We have iterated through all possible cases and thus the proof is complete.
\end{proof}

\begin{lem}\label{diffset}
Let $k$ and $\ell\in \mathbb{N}$ be chosen. Consider
\begin{align*}
    S=(0\,|\,1,1,2,
    \underbrace{1,{\underbrace{4\ldots,4}_{k\text{-times}},3},\ldots{},
    {1,\underbrace{4\ldots,4}_{k\text{-times}}},3}_{\ell\text{-times}},1,1,2,1).
\end{align*}
Then the set $[1,\max(S)]\setminus (S-S)$ of missing positive differences is exactly \begin{align*}T\ =\ [6,6+4(k-1)]_4&\cup [14+4(k-1),14+8(k-1)]_4\cup \\ \cdots&\cup [6+8(\ell-1)+4(\ell-1)(k-1), 6+8(\ell-1)+4\ell(k-1)]_4.\end{align*}
\end{lem}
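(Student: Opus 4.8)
The plan is to exploit the correspondence already established in the paper: a positive integer $d$ lies in $S-S$ if and only if some consecutive run of the SCD sums to $d$. Determining the missing differences therefore amounts to deciding which values in $[1,\max(S)]$ are realized as run sums, where $\max(S)$ is just the total of the SCD, namely $9+4\ell(k+1)$. I would organize the whole argument by residue class modulo $4$, motivated by the fact that every element of the target set $T$ is $\equiv 2 \pmod 4$ and at least $6$, i.e.\ of the form $6+4i$.

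First I would dispose of the residue classes $0,1,3$ and the single value $d=2$ by exhibiting runs directly. The value $2$ is the literal entry of the SCD. For each of the other three classes, a run that starts in the initial fringe and is extended rightward through the interior passes, within its own residue class, through an arithmetic progression of step $4$; because the interior is a solid stretch of $4$'s, bridged across block boundaries by the pattern $3,1$ (which also sums to $4$), these progressions have no internal gaps and reach all the way up to $\max(S)$. Running this check for classes $0,1,3$ confirms that nothing outside $\{6+4i\}$ can be missing.

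The core of the argument is the class $d\equiv 2\pmod 4$ with $d\ge 6$. Here I would invoke Lemma~\ref{forms}: any run summing to $6+4i$ must have one of the four shapes \ref{item:form1}--\ref{item:form4}. Evaluating their totals, forms \ref{item:form1} and \ref{item:form3} each equal $6+4\ell(k+1)$, while forms \ref{item:form2} and \ref{item:form4} equal $2+4j(k+1)$ for $j=1,\dots,\ell$. Writing $d=6+4i$, the achievable indices are therefore exactly $i\in\{\,j(k+1)-1 : 1\le j\le \ell\,\}\cup\{\ell(k+1)\}$, and the missing ones are the complement inside $\{0,1,\dots,\ell(k+1)\}$. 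A short computation then matches these to $T$: the smallest achievable index is $k$, so $i=0,\dots,k-1$ are missing (yielding $d=6,\dots,6+4(k-1)$, the first progression of $T$); consecutive achievable indices $j(k+1)-1$ differ by $k+1$, leaving exactly $k$ missing indices in each gap between them; and the final two achievable indices $\ell(k+1)-1$ and $\ell(k+1)$ are adjacent, so no gap follows. This produces precisely $\ell$ gaps of size $k$, which translate under $d=6+4i$ into the $\ell$ arithmetic progressions listed in $T$.

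The step I expect to be the main obstacle is not the application of Lemma~\ref{forms}, which essentially hands over the answer for the hard residue class, but rather the completeness check in the second step: one must verify that classes $0,1,3$ genuinely contain no missing values, with particular care taken near $\max(S)$ and at the junctions between the fringe $1,1,2,1$ and the interior blocks, so that the only gaps really do occur in class $2\pmod 4$. (As a consistency check, the example $S_{2,3}$ has $|T|=\ell k=6$ and $|S-S|=2(\max(S)-6)+1=79$, matching the stated value.) Once that coverage is in hand, combining it with the gap computation for $6+4i$ yields $[1,\max(S)]\setminus(S-S)=T$ exactly.
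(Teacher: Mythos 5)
Your proposal is correct and follows essentially the same route as the paper: both arguments hinge on Lemma \ref{forms} to control the residue class $2 \pmod 4$ (forms \ref{item:form1} and \ref{item:form3} summing to $6+4\ell(k+1)$, forms \ref{item:form2} and \ref{item:form4} to $2+4j(k+1)$), and both cover the classes $0,1,3 \pmod 4$ by exhibiting runs built from the fringe $1,1,2,1$ and the step-$4$ structure of the interior. The only difference is organizational—you enumerate the achievable sums $\equiv 2 \pmod 4$ and compute the complement directly, whereas the paper argues by contradiction that each element of $T$ is unattainable and then separately exhibits runs for the attainable values—but the underlying arithmetic is identical.
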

(Recall from Section \ref{not} that $[a,b]_4:=\{x\,|\,x\equiv a\pmod{4}\mbox{ and }a\le x\le b\}$.)
\begin{exa}
We use $S_{2,3}$ as an example. We have \begin{align*}S_{2,3}-S_{2,3}\ =\ [-45,45]\backslash \{\pm 6,\pm 10,\pm 18,\pm 22 ,\pm 30,\pm 34\}.\end{align*}
Note that \begin{align*}\{ 6,10,18,22 , 30, 34\}\ =\ [6,10]_4\cup[18,22]_4\cup[30,34]_4,\end{align*} which is the set $T$ in Lemma \ref{diffset}.
\end{exa}
\begin{proof}
Pick $1\le i\le \ell$. We show that $S-S$ misses
\[
[6+8(i-1)+4(i-1)(k-1), 6+8(i-1)+4i(k-1)]_4;
\]
equivalently, there are no runs that sum up to $6+8(i-1)+4(i-1)(k-1)+4m$ for all $0\le m\le k-1$. We prove this by contradiction.
Pick some $0\le m\le k-1$. Suppose that such a run exists; the run must be one of the forms in Lemma \ref{forms}.
Notice that \begin{align*}6+8(i-1)+4(i-1)(k-1)+4m&\ \le \ 6+8(i-1)+4i(k-1)\\&\ \le\  6+8(\ell-1)+4\ell(k-1).\end{align*}
Since both form \ref{item:form1} and form \ref{item:form3} in Lemma \ref{forms}
gives $6+8\ell+4(k-1)\ell$, our run must be of the form \ref{item:form2} or \ref{item:form4}.
We consider these two cases.
\begin{enumerate}
    \item \textbf{Case I}: the run is of form \ref{item:form2}. Then it sums up to $2+4(k+1)j$ for some $j\ge 1$. We have:
    \begin{equation}
    \begin{split}
    2+4(k+1)j\ &=\ 6+8(i-1)+4(i-1)(k-1)+4m\\
    (k+1)(j-i+1)\ &=\ 1+m.\
    \end{split}
    \end{equation}
    So, $1\le (k+1)(j-i+1)=1+m\le k$ and so, $0< j-i+1< 1$, which is a contradiction.
    \item \textbf{Case II}: the run is of the form \ref{item:form4}. Then it sums up to $4(k+1)j+2$ for some $j\ge 1$. As above, we find a contradiction. We have shown that $S-S$ misses $T$.
\end{enumerate}
To complete the proof, we show that $S-S$ contains $[0,9+4(k+1)\ell]\backslash T$.  Note that close to the beginning of the SCD, we have $1+2+1=4$ and after that, the sequence implicitly contains consecutive differences of $4$ (because $3+1=4$ and $1+2+1=4.)$ So, $S-S$ contains all numbers in $[0,9+4(k+1)\ell]$ that are $0\Mod 4$. Similarly, it is not hard to see that $S-S$ contains all numbers that are either $1\Mod 4$ or $3\Mod 4$. Next, we show that all numbers that are $ 2\Mod 4$ and not in $T$ are in $S-S$. We have $2\in S-S$ and $10+4(k-1)\in S-S$ because $10+4(k-1)=(2+1)+4(k-1)+(4+3)$. Therefore, $\{10+4(k-1)+(1+4k+3)i\,|\,0\le i\le\ell-1
\}\subseteq S-S$. Lastly, $6+4(k+1)\ell \in S-S$.
because we have the run $2,
    \underbrace{1,{\underbrace{4\ldots,4}_{k\text{-times}},3},\ldots{},
    {1,\underbrace{4\ldots,4}_{k\text{-times}}},3}_{\ell\text{-times}},1,1,2$.
\end{proof}

\begin{cor}\label{cordiffset}
Choose $k$ and $\ell\in \mathbb{N}$, and let
\begin{align*}
    S=(0\,|\,1,1,2,
    \underbrace{1,{\underbrace{4\ldots,4}_{k\text{-times}},3},\ldots{},
    {1,\underbrace{4\ldots,4}_{k\text{-times}}},3}_{\ell\text{-times}},1,1,2,1).
\end{align*}
Then $|S-S|=19+\ell(6k+8)$.
\end{cor}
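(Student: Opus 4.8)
The plan is to exploit the symmetry of the difference set together with the exact description of the missing positive differences supplied by Lemma~\ref{diffset}. Since $S-S$ is symmetric about $0$ and contains $0$, every positive difference is paired with its negative, so
$$|S-S|\ =\ 2\,\bigl|(S-S)\cap[1,\max(S)]\bigr|+1.$$
By Lemma~\ref{diffset} we have $(S-S)\cap[1,\max(S)]=[1,\max(S)]\setminus T$, hence $\bigl|(S-S)\cap[1,\max(S)]\bigr|=\max(S)-|T|$, and the corollary reduces to evaluating the two quantities $\max(S)$ and $|T|$.

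First I would compute $\max(S)$, which is just the sum of all entries of the SCD because the set begins at $0$. The prefix $1,1,2$ contributes $4$, each of the $\ell$ copies of $M^k=1,\underbrace{4,\ldots,4}_{k},3$ contributes $1+4k+3=4(k+1)$, and the suffix $1,1,2,1$ contributes $5$; hence $\max(S)=9+4\ell(k+1)$ (as a check, $S_{2,3}$ gives $9+4\cdot 3\cdot 3=45$, matching the example). Next I would count $|T|$. Each of the $\ell$ sets in the union defining $T$ is an arithmetic progression of common difference $4$ of the shape $[a,a+4(k-1)]_4=\{a,a+4,\ldots,a+4(k-1)\}$, and therefore has exactly $k$ elements.

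The one point that genuinely needs checking — and the step I would flag as the main (if mild) obstacle — is that these $\ell$ progressions are pairwise disjoint, so that $|T|=\ell k$ with no over-counting. This follows by comparing consecutive blocks: the $i$-th progression ends at $6+8(i-1)+4i(k-1)$ while the $(i+1)$-th begins at $6+8i+4i(k-1)$, leaving a gap of $8$; and since every element of $T$ is $\equiv 2\Mod 4$, consecutive blocks cannot overlap. Thus $|T|=\ell k$. Substituting both quantities into the displayed identity yields
$$|S-S|\ =\ 2\bigl(9+4\ell(k+1)-\ell k\bigr)+1\ =\ 19+6\ell k+8\ell\ =\ 19+\ell(6k+8),$$
as claimed. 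Everything beyond Lemma~\ref{diffset} is the symmetry reduction plus a short count, so I expect no serious difficulty once the disjointness of the blocks of $T$ is confirmed.
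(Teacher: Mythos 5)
Your proof is correct and is exactly the counting argument the paper leaves implicit: the corollary is stated as an immediate consequence of Lemma~\ref{diffset}, and filling it in requires precisely your three ingredients (symmetry of $S-S$ about $0$, $\max(S)=9+4\ell(k+1)$, and $|T|=\ell k$ via the $k$-element blocks with pairwise gaps of $8$). The arithmetic $2\bigl(9+4\ell(k+1)-\ell k\bigr)+1=19+\ell(6k+8)$ checks out, e.g.\ against $|S_{2,3}-S_{2,3}|=79$.
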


\begin{lem}\label{sumset1}
Choose $k\ge 2$ and $\ell\in \mathbb{N}$, and let
\begin{align*}
    S=(0\,|\,1,1,2,
    \underbrace{1,{\underbrace{4\ldots,4}_{k\text{-times}},3},\ldots{},
    {1,\underbrace{4\ldots,4}_{k\text{-times}}},3}_{\ell\text{-times}},1,1,2,1).
\end{align*}
Then $S+S$ contains $[0,8(k+1)\ell+18]\backslash T$, where
\begin{align*}
T\ =\ &\left(\bigcup_{i\in [1,\ell]}\left[12+12(i-1)+4(i-1)(k-2),12+12(i-1)+4i(k-2)\right]_4\right) \\
&\cup\ \Bigg(\bigcup_{i\in [1,\ell]}\left[12\ell+4\ell(k-2)+16+4(i-1)(k-2)+12(i-1),\right.\\
&\ \qquad \qquad \qquad \left.12\ell+4\ell(k-2)+16+4i(k-2)+12(i-1)\right]_4\Bigg).
\end{align*}
\end{lem}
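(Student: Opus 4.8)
The plan is to compute $S+S$ by splitting it according to residues modulo $4$, exploiting the fact (immediate from the SCD, since the only consecutive differences are $1,2,3,4$ and one tracks the prefix sums) that every element of $S$ is $\equiv 0,1,$ or $2\pmod 4$ and \emph{never} $\equiv 3$. Write $E_r=\{s\in S:s\equiv r\bmod 4\}$, and set $q=k+1$ and $M=\max(S)=9+4q\ell$. First I would record three structural facts: $E_1$ is the \emph{complete} progression $[1,M]_1$ (consecutive residue-$1$ elements never differ by more than $4$, since each $4$-run and each junction $3,1$ or $1,1,2,1$ advances the residue-$1$ lattice by exactly one step); $E_2=\{2,\,M-3\}$ consists of the two elements coming from the two differences equal to $2$; and $E_0=\{0\}\cup\{4+4qj:0\le j\le\ell\}\cup\{M-1\}$ is sparse, with consecutive gaps at most $4q$.

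Because $S$ has no element $\equiv 3$, the residue classes of $S+S$ decouple cleanly: the class $\equiv 1$ is exactly $E_0+E_1$, the class $\equiv 3$ is $E_1+E_2$, the class $\equiv 2$ is $(E_0+E_2)\cup(E_1+E_1)$, and the class $\equiv 0$ is $(E_0+E_0)\cup(E_2+E_2)$. The first three classes are the easy part. Since $E_1=[1,M]_1$ is a full progression and the gaps of $E_0$ are $\le 4q\le M$, the union $E_0+E_1$ telescopes to the complete progression $[1,2M-1]_1$; likewise $E_1+E_1=[2,2M]_2$ and $E_1+E_2=[3,2M-3]_3$ are complete. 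Hence every residue-$1$, residue-$2$, and residue-$3$ point of $[0,2M]$ lies in $S+S$, so no hole of the lemma can occur off the class $\equiv 0$, consistent with $T\subseteq 4\mathbb Z$.

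The entire content therefore lives in the residue-$0$ class, where I would divide everything by $4$. Here $(E_0+E_0)/4=F+F$ with $F=\{0\}\cup A\cup\{2+q\ell\}$ and $A=\{1+qj:0\le j\le\ell\}$, while $(E_2+E_2)/4=\{1,\,2+q\ell,\,3+2q\ell\}$ turns out to be already contained in $F+F$. A direct expansion gives $F+F=\{0\}\cup A\cup(A+A)\cup\big((2+q\ell)+A\big)\cup\{4+2q\ell\}$: the point $0$, the step-$q$ progressions at offsets $1,2,3$ (offset $1$ only over blocks $0,\dots,\ell$; offset $3$ only over blocks $\ell,\dots,2\ell$; offset $2$ over all blocks $0,\dots,2\ell$), and the top point $4+2q\ell$. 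It then remains to check that the integers of $[0,4+2q\ell]$ missed by this set are exactly $T/4$; writing $T_1,T_2$ for the first and second unions in $T$ and translating into ``block plus offset'' coordinates, $T_1/4$ accounts for offsets $3,\dots,q-1$ in blocks $0,\dots,\ell-1$ together with offset $0$ in blocks $1,\dots,\ell$, and $T_2/4$ accounts for offsets $4,\dots,q-1$ in blocks $\ell,\dots,2\ell-1$ together with offsets $0,1$ in blocks $\ell+1,\dots,2\ell$; these match the uncovered set block by block.

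I expect the last step, the block-by-offset matching, to be the main obstacle, not conceptually but in bookkeeping. The delicate points are the transition block $\ell$ (where offset-$1$ coverage from $A$ stops and offset-$3$ coverage from $(2+q\ell)+A$ begins) and the two boundary points $0$ and $4+2q\ell$, which absorb what would otherwise look like missing offset-$0$ and offset-$4$ values; and the small cases $k=2,3$ (that is, $q=3,4$), where offset $3$ (respectively $4$) wraps into offset $0$ (respectively $1$) of the next block and so must be verified separately. Once these are dispatched, multiplying back by $4$ recovers $[0,8(k+1)\ell+18]\setminus T\subseteq S+S$.
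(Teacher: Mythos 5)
Your proposal is correct and follows essentially the same route as the paper: both decompose $S+S$ by residue mod $4$, use the complete residue-$1$ progression (your $E_1$, the paper's $S_1$) together with a handful of even elements to cover the classes $1,2,3$, and reduce the residue-$0$ class to an arithmetic verification over the sparse set $E_0$ (the paper's $S_2=\{0,4,4+4(k+1),\ldots,4+4\ell(k+1),8+4\ell(k+1)\}$). The only difference is bookkeeping: the paper enumerates the non-$T$ multiples of $4$ and exhibits an explicit sum representation for each, while you rescale by $4$ and match blocks and offsets --- the same verification in different coordinates.
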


\begin{proof}
Observe that $S_1=\{i\,|\,1\le i\le 4(k+1)\ell+9\mbox{ and }i\equiv 1\Mod 4\}\subseteq S$. So, all of the following sets are in $S+S$:
\begin{align*}
    0+S_1\ &=\ \{i\,|\,1\le i\le 4(k+1)\ell+9\mbox{ and }i\equiv 1\Mod 4\},\\
    1+S_1\ &=\ \{i\,|\,2\le i\le 4(k+1)\ell+9\mbox{ and }i\equiv 2\Mod 4\},\\
    2+S_1\ &=\ \{i\,|\,3\le i\le 4(k+1)\ell+9\mbox{ and }i\equiv 3\Mod 4\},\\
    4(k+1)\ell+9+S_1\ &=\ \{i\,|\,4(k+1)\ell+10\le i\le 8(k+1)\ell+18\mbox{ and }i\equiv 2\Mod 4\},\\
    4(k+1)\ell+8+S_1\ &=\ \{i\,|\,4(k+1)\ell+9\le i\le 8(k+1)\ell+17\mbox{ and }i\equiv 1\Mod 4\},\\
    4(k+1)\ell+6+S_1\ &=\ \{i\,|\,4(k+1)\ell+7\le i\le 8(k+1)\ell+15\mbox{ and }i\equiv 3\Mod 4\}
\end{align*}
Thus $S+S$ contains all numbers that are either $1,2$ or $3\Mod 4$ in the interval $[0,8(k+1)\ell+18]$. Now, we focus on numbers that are divisible by $4$. Observe that
$S_2 =\{0,4,4+4(k+1),4+8(k+1),\ldots,4+4\ell(k+1),8+4\ell(k+1)\}\subseteq S$. We write
\begin{align*}
    S_2\ = \ \{0,8+4\ell(k+1)\}\cup\{4+4i(k+1)|0\le i\le \ell\}.
\end{align*}
We show that all numbers divisible by $4$ that are not in $T$ are in $S+S$. The set of all numbers divisible by $4$ that are not in $T$ is
\begin{align*}\{0,4,8,12+4\ell(k+1),&8\ell+8k\ell+16\}\cup (4+\{12+12(i-1)+4i(k-2)|1\le i\le \ell\})\\ &\cup (8+\{12+12(i-1)+4i(k-2)|1\le i\le \ell\})\\&\cup (20+\{12\ell+4(\ell+i)(k-2)+12(i-1)|1\le i\le \ell\})\\&\cup (24+\{12\ell+4(\ell+i)(k-2)+12(i-1)|1\le i\le \ell\}).\end{align*}
We know the following:
\begin{enumerate}
\item Because $0,4,8+4(k+1)\ell\in S$, $\{0,4,8, 12+4\ell(k+1),8\ell+8k\ell+16\}\subseteq S+S$.
\item For each $1\le i\le \ell$, we have $4+(12+12(i-1)+4i(k-2))=0+(4+4i(k+1))\in S+S$.
\item For each $1\le i\le \ell$, we have $8+(12+12(i-1)+4i(k-2))=4+(4+4i(k+1))\in S+S$.
\item For each $1\le i\le \ell$, we have
$20+12\ell+4(\ell+i)(k-2)+12(i-1)=(4+4\ell(k+1))+(4+4i(k+1))\in S+S$.
\item For each $1\le i\le \ell$, we have
$24+12\ell+4(\ell+i)(k-2)+12(i-1)=(8+4\ell(k+1))+(4+4i(k+1))\in S+S$.
\end{enumerate}
We have shown that all numbers divisible by $4$ that are not in $T$ are in $S+S$, and this completes the proof.
\end{proof}

\begin{lem}\label{sumset2}
Choose $k\ge 2$ and $\ell\in \mathbb{N}$, and let
\begin{align*}
    S\ = \ (0\,|\,1,1,2,
    \underbrace{1,{\underbrace{4\ldots,4}_{k\text{-times}},3},\ldots{},
    {1,\underbrace{4\ldots,4}_{k\text{-times}}},3}_{\ell\text{-times}},1,1,2,1).
\end{align*}
Then $S+S$ contains none of the elements in
\begin{align*}
T\ =\ &\Bigg(\bigcup_{i\in [1,\ell]}\big[12+12(i-1)+4(i-1)(k-2),12+12(i-1)+4i(k-2)\big]_4\Bigg) \\
&\cup\ \Bigg(\bigcup_{i\in [1,\ell]}\big[12\ell+4\ell(k-2)+16+4(i-1)(k-2)+12(i-1),\\
&\ \qquad \qquad \qquad 12\ell+4\ell(k-2)+16+4i(k-2)+12(i-1)\big]_4\Bigg).
\end{align*}
\end{lem}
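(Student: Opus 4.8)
The plan is to determine precisely which residues modulo $4$ occur among the elements of $S$ and to combine this with the observation (visible in the statement) that every element of the set $T$ is divisible by $4$. First I would walk through the partial sums of the SCD. Since the running residue modulo $4$ changes only when an SCD entry is $1$, $2$, or $3$, and since every $3$ closes off a block of $4$'s that was entered from a value $\equiv 1\Mod 4$ (so that $1+3=4$ returns the running sum to $\equiv 0$), I expect to show that $S$ contains \emph{no} element $\equiv 3\Mod 4$. More precisely, I would verify that the elements of $S$ which are $\equiv 0\Mod 4$ are exactly the set $S_2=\{0,8+4\ell(k+1)\}\cup\{4+4i(k+1)\,|\,0\le i\le\ell\}$ from Lemma \ref{sumset1}, that the only elements $\equiv 2\Mod 4$ are $2$ and $6+4\ell(k+1)$, and that every other element is $\equiv 1\Mod 4$.

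Granting this, the reduction is immediate: if an element of $S+S$ is divisible by $4$, it is a sum $a+b$ with $a,b\in S$, and the residues of $(a,b)$ modulo $4$ must be $(0,0)$ or $(2,2)$, since the absence of elements $\equiv 3\Mod 4$ rules out the only other possibility $(1,3)$. The case $(2,2)$ contributes only the three values $4$, $8+4\ell(k+1)$ and $12+8\ell(k+1)$; these I would dispatch directly, noting that $4$ lies below $\min T$, and that the other two, after dividing by $4$, land in residue classes modulo $k+1$ that the analysis of $T$ below excludes.

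The heart of the argument, and the step I expect to be the main obstacle, is showing $(S_2+S_2)\cap T=\emptyset$. Here I would divide through by $4$ and write $q=k+1\ (\ge 3)$, so that $S_2/4=\{0\}\cup\{1+iq\,|\,0\le i\le\ell\}\cup\{2+\ell q\}$ and a direct computation gives
$$(S_2/4)+(S_2/4)=\{0\}\cup\{1+iq\,|\,0\le i\le\ell\}\cup\{2+sq\,|\,0\le s\le 2\ell\}\cup\{3+tq\,|\,\ell\le t\le 2\ell\}\cup\{4+2\ell q\}.$$
In the same coordinates the two families constituting $T/4$ become the integer intervals $[3+(i-1)q,\,iq]$ for $1\le i\le\ell$ and $[(\ell+i-1)q+4,\,(\ell+i)q+1]$ for $1\le i\le\ell$. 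The clean structural facts driving the comparison are that the first family contains no integer $\equiv 1$ or $2\Mod q$, while the second contains no integer $\equiv 2$ or $3\Mod q$. Consequently the residue-$2$ progression $\{2+sq\}$ cannot meet $T$ at all, and the residue-$1$ progression can only threaten the second family. I would then finish by comparing value ranges: the residue-$1$ sums reach at most $1+\ell q$, strictly below the smallest residue-$1$ element $(\ell+1)q+1$ of the second family; the residue-$3$ sums $3+tq$ begin at $3+\ell q$, strictly above the largest residue-$3$ element $3+(\ell-1)q$ of the first family; and the lone value $4+2\ell q$ exceeds every element of either family. The delicate point will be keeping the index ranges aligned—the shift by $\ell$ in the powers of $q$ and the off-by-one between the ranges $0\le i\le\ell$ and $1\le i\le\ell$—so that each progression of $S_2+S_2$ falls exactly into a gap of $T$; once these ranges are checked, together with $0\notin T$, the proof is complete.
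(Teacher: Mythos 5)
Your proposal is correct, and while it rests on the same two arithmetic pillars as the paper's proof --- that $S$ has no elements $\equiv 3 \pmod 4$ (so a sum divisible by $4$ must come from a $(0,0)$ or $(2,2)$ residue pair) and that the mod-$(k+1)$ structure of the multiples of $4$ in $S$ is incompatible with $T$ --- it organizes them in the opposite direction. The paper argues target-first: for each element of $T$ it uses size bounds ($\le 4\ell(k+1)$ for the first family, $\ge 4\ell(k+1)+16$ for the second) to cut the candidate summands down to a small ``set of concern,'' and then kills each possible representation with a contradiction of the form $0<(m'-m+1)(k+1)=n+c\le k$, hence $0<m'-m+1<1$. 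You argue sum-first: after dividing by $4$ and writing $q=k+1$ you compute $(S_2/4)+(S_2/4)$ in closed form as a union of arithmetic progressions, rewrite $T/4$ as the integer intervals $[(i-1)q+3,\,iq]$ and $[(\ell+i-1)q+4,\,(\ell+i)q+1]$, and check each progression against each family by residue mod $q$ or by range. I verified your residue classification of $S$, your closed form for the sumset, your normalized description of $T/4$, and the residue/range exclusions; all are correct. Your version buys transparency and economy: the paper's six nearly identical contradiction arguments are exactly your one-time observations that the first family misses residues $1,2$ and the second misses residues $2,3$ modulo $q$, and your closed-form sumset essentially contains the computation behind Lemma \ref{sumset1}'s treatment of multiples of $4$. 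What the paper's route buys is that it never needs to exhibit (and verify) a complete description of the relevant sums, only to refute each representation of each target.

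One small slip to fix: you dispatch the $(2,2)$-sum $12+8\ell(k+1)$, i.e.\ $3+2\ell q$ after scaling, by saying it lies in a residue class mod $q$ that $T$ excludes; that is true of the second family but false of the first, which does contain residue-$3$ elements, namely $(i-1)q+3$ for $1\le i\le\ell$. The element is still harmless, but for a range reason: $3+2\ell q$ exceeds $\max(T/4)=2\ell q+1$ --- the same comparison you already make for $4+2\ell q$ --- so the repair is one line.
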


\begin{proof}
To complete the proof, we prove that
none of the numbers in $T$ are in $S+S$. We write out $S$ explicitly:
\begin{align*}
    S\ =\ \{0,1,2,4,5\}\ &\cup\ \{5+4(j-1)(k+1)+4i\,|\,1\le j\le \ell,1\le i\le k\}\\
    &\cup \ \{4+4i(k+1)|1\le i\le \ell\}\ \cup \ \{5+4i(k+1)|1\le i\le \ell\}\\
    &\cup\ \{6+4\ell(k+1),8+4\ell(k+1), 9+4\ell(k+1)\}.
\end{align*}
We consider elements in \begin{align*}\bigcup_{i\in [1,\ell]}\big[12+12(i-1)+4(i-1)(k-2),12+12(i-1)+4i(k-2)\big]_4.\end{align*} Pick $1\le m\le \ell$ and $0\le n\le k-2$. Consider
\begin{align*}
12+12(m-1)+4(m-1)(k-2)+4n\
&= \ 4m(k+1)-4k+8+4n \\
&\le \ 4\ell+4k\ell-4k+8+4(k-2)\\
&=\ 4\ell(k+1).
\end{align*}
Because $S$ contains no numbers that are $3\Mod 4$, for a pair whose sum is $4m(k+1)-4k+8+4n$, we cannot use numbers that are $1\Mod 4$. Also, because $4m(k+1)-4k+8+4n\le 4\ell(k+1)$, we can ignore all numbers that are greater than $4\ell(k+1)$. Hence, our set of concern is
\begin{align*}
 \{0,2,4\}\ \cup \ \{4+4i(k+1)\,|\,1\le i\le \ell\}.
\end{align*}
If a pair that sums to $4m(k+1)+4(n-k)+8$ is in $\{4+4i(k+1)\,|\,1\le i\le \ell\}$, then there exists $m'$ and $n'$ such that
\begin{align*}
8+4(m'+n')(k+1)\ &=\ 4m(k+1)+4(n-k)+8,\\
(m'+n'-m+1)(k+1)\ &=\ n+1.
\end{align*}
Thus, $0<(m'+n'-m+1)(k+1)=n+1\le k-1$ and we get, $0<m'+n'-m+1<1$, a contradiction. Therefore, one of the number is in $\{0,2,4\}$. Let $4+4m'(k+1)$ be the number used in $\{4+4i(k+1)|1\le i\le \ell\}$. We consider three cases corresponding to the three elements in $\{0,2,4\}$.
\begin{enumerate}
    \item We have $0+(4+4m'(k+1))=4m(k+1)-4k+8+4n$. So, $0<(k+1)(m'-m+1)=n+2\le k$, which implies $0<m'-m+1<1$, a contradiction.
    \item We have $2+(4+4m'(k+1))=4m(k+1)-4k+8+4n$. So, $0<2(m'-m+1)(k+1)=3+2n\le 2k-1$, which implies $0<m'-m+1<1$, a contradiction.
    \item We have $4+(4+4m'(k+1))=4m(k+1)-4k+8+4n$. So, $0<(m'-m+1)(k+1)=n+1\le k-1$, which implies $0<m'-m+1<1$, a contradiction.
\end{enumerate}
Next, we consider elements in
\[
\bigcup_{i\in [1,\ell]}\big[12\ell+4\ell(k-2)+16+4(i-1)(k-2)+12(i-1)
,12\ell+4\ell(k-2)+16+4i(k-2)+12(i-1)\big]_4.
\]Pick $1\le m\le \ell$ and $0\le n\le k-2$. Consider
\begin{align*}
    12\ell&+4\ell(k-2)+16+4(m-1)(k-2)+12(m-1)+4n\\
    &= \ 4(\ell+m)(k+1)+12-4k+4n\\
    &\ge \ 4(\ell+1)(k+1)+12-4k\ =\ 4(k+1)\ell+16.
\end{align*}
Thus, we cannot use any of $0,2,4$ in our pair.
As above, the set which concerns us is \begin{align*} \{4+4i(k+1)\,|\,1\le i\le \ell\}\ \cup\ \{6+4\ell(k+1),8+4\ell(k+1)\}.
\end{align*}
If a pair that sums to $4(\ell+m)(k+1)+12-4k+4n$ is in $\{4+4i(k+1)|1\le i\le \ell\}$,  then for some $1\le m',n'\le \ell$ we have:
\begin{equation*}
\begin{split}
    4(\ell+m)(k+1)+12-4k+4n\ &=\ 8+4(k+1)(m'+n')\\
    0\ <\ (-\ell-m+m'+n'+1)(k+1)\ &=\ n+2\ \le \ k
\end{split}
\end{equation*}
So, $0<-\ell-m+m'+n'+1<1$, a contradiction. Therefore, a number in the pair must be in $\{6+4\ell(k+1),8+4\ell(k+1)\}$. Since $4(\ell+m)(k+1)+12-4k+4n\le 8\ell(k+1)+4$, both numbers cannot be in $\{6+4\ell(k+1),8+4\ell(k+1)\}$. We consider two cases corresponding to the two elements of $\{6+4\ell(k+1),8+4\ell(k+1)\}$:
\begin{enumerate}
    \item We have $(6+4\ell(k+1))+(4+4m'(k+1))=4(\ell+m)(k+1)+12-4k+4n$. Equivalently, $0<2(k+1)(m'-m+1)=2n+3\le 2(k-2)+3=2k-1$ and so $0<m'-m+1<1$, a contradiction.
    \item We have $(8+4\ell(k+1))+(4+4m'(k+1))=4(\ell+m)(k+1)+12-4k+4n$. Equivalently, $0<(k+1)(m'-m+1)=n+1\le (k-2)+1=k-1$ and so, $0<m'-m+1<1$, a contradiction.
\end{enumerate}
This completes our proof.
\end{proof}
\begin{exa}
We use $S_{2,3}$ as an example to illustrate Lemma \ref{sumset1} and Lemma \ref{sumset2}. We have \begin{align*}S_{2,3}+S_{2,3}\ =\ [0,90]\backslash \{12,24,36,52,64,76\},\end{align*}
and the set $\{12,24,36,52,64,76\}$ is exactly the set $T$ in Lemmas \ref{sumset1} and Lemma \ref{sumset2}.
\end{exa}
\begin{cor}\label{corsumset}
Let $k\ge 2$ and $\ell\in \mathbb{N}$ be chosen. Let
\begin{align*}
    S\ = \ (0\,|\,1,1,2,
    \underbrace{1,{\underbrace{4\ldots,4}_{k\text{-times}},3},\ldots{},
    {1,\underbrace{4\ldots,4}_{k\text{-times}}},3}_{\ell\text{-times}},1,1,2,1).
\end{align*}
Then $|S+S|=19+\ell(6k+10)$.
\end{cor}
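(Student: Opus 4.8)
The plan is to combine the two preceding lemmas to pin down $S+S$ exactly as a punctured interval, and then count. Lemma \ref{sumset1} shows that $S+S$ contains every element of $[0,8(k+1)\ell+18]\setminus T$, while Lemma \ref{sumset2} shows that $S+S$ contains no element of $T$. Since the largest element of $S$ is $9+4(k+1)\ell$ (read off from the explicit description of $S$ used in the proof of Lemma \ref{sumset2}), every sum of two elements of $S$ lies in $[0,2(9+4(k+1)\ell)]=[0,8(k+1)\ell+18]$, so $S+S\subseteq [0,8(k+1)\ell+18]$. Together with the two lemmas this forces the exact identity $S+S=[0,8(k+1)\ell+18]\setminus T$, whence $|S+S|=(8(k+1)\ell+19)-|T|$.

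It remains to compute $|T|$. Here $T$ is a union of two families of residue-$0\Mod 4$ progressions $[a_i,b_i]_4$ indexed by $i\in[1,\ell]$, and for a single progression with $a\equiv b\equiv 0\Mod 4$ the count is $(b-a)/4+1$. First I would check that in each family the $i$-th interval has $b_i-a_i=4(k-2)$ (this uses the hypothesis $k\ge 2$), giving $k-1$ elements per interval. Next I would verify disjointness: within each family consecutive intervals satisfy $a_{i+1}=b_i+12$, so the residue-$0$ values $b_i+4$ and $b_i+8$ are genuinely omitted, and the largest element of the first family, $b_\ell=12\ell+4\ell(k-2)$, is strictly below the smallest element of the second family, $a'_1=b_\ell+16$. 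With disjointness in hand the two families contribute $\ell(k-1)$ elements each, so $|T|=2\ell(k-1)$.

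Substituting yields $|S+S|=8(k+1)\ell+19-2\ell(k-1)=19+\ell\bigl(8(k+1)-2(k-1)\bigr)=19+\ell(6k+10)$, as claimed. The only step with any subtlety is the cardinality count for $T$: one must correctly read off the per-interval length $k-1$ and, crucially, confirm that the $2\ell$ progressions are pairwise disjoint, since overlaps would invalidate the additivity $|T|=2\ell(k-1)$. This is exactly where the gap relations $a_{i+1}=b_i+12$ (within a family) and $a'_1=b_\ell+16$ (between families) do the work; once they are established the rest is bookkeeping. As a sanity check, for $S_{2,3}$ (so $k=2,\ \ell=3$) the formula gives $19+3\cdot 22=85$ and $|T|=2\cdot 3\cdot 1=6$, matching the worked example where $T=\{12,24,36,52,64,76\}$ and $|S_{2,3}+S_{2,3}|=85$.
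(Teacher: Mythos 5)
Your proof is correct and follows essentially the same route as the paper, which states this corollary without explicit proof precisely because it follows from Lemmas \ref{sumset1} and \ref{sumset2} in the way you describe: those lemmas plus the bound $\max(S+S)=2\max(S)=8(k+1)\ell+18$ pin down $S+S=[0,8(k+1)\ell+18]\setminus T$, and then one counts $|T|=2\ell(k-1)$. The only detail you leave tacit is that $T\subseteq[0,8(k+1)\ell+18]$, which is needed to write $|S+S|=(8(k+1)\ell+19)-|T|$; this holds since the largest element of $T$ is $12\ell+4\ell(k-2)+16+4\ell(k-2)+12(\ell-1)=8(k+1)\ell+4$.
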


\begin{proof}[Proof of Theorem \ref{periodic}, Item \ref{firsti}] The proof follows immediately from Corollaries \ref{cordiffset} and \ref{corsumset}, because
\[
|S_{k,\ell}+S_{k,\ell}| - |S_{k,\ell}-S_{k,\ell}|
= [19+\ell(6k+10)] - [19+\ell(6k+8)] = 2\ell.\qedhere
\]
\end{proof}

\begin{rek}
Theorem \ref{periodic} is a generalization of Theorem 2 and Theorem 3 in \cite{PW}. With a little more work, we can show that sets in $\mathcal{F}_{per}$ are RSD for $\ell$ sufficiently large.
\end{rek}

We also offer another family of MSTD sets formed by repeating certain interior blocks. We do not prove the theorem since it is not in the focus of the current paper. However, the proof is very similar to the proof of Theorem \ref{periodic} but replacing ``modulo $4$'' by ``modulo $k$'' throughout.

\begin{thm}\label{anotherdim}
For $k\ge 4$, the following is a MSTD set:
$$A_{k,1}\ = \ (0\,|\,\underbrace{1,1,\ldots,1}_{k-2\text{-times}},2,1,k\underbrace{k+1,k+1,\ldots,k+1}_{k-4\text{-times}},3,\underbrace{1,1,\ldots,1}_{k-2\text{-times}},2,1),$$
and $|A_{k,1}+A_{k,1}|-|A_{k,1}-A_{k,1}|=2$. Define $A_{k,\ell}$ to be a similarly built set with the sequence $1,k,\underbrace{k+1,k+1,\ldots,k+1}_{k-4\text{-times}},3$ repeated $k$ times, then
\[
|A_{k,\ell}+A_{k,\ell}|-|A_{k,\ell}-A_{k,\ell}|
=2\ell
\]
\end{thm}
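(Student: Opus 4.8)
The plan is to reproduce, in the same four-part structure used for Theorem~\ref{periodic}(\ref{firsti}), the computation of $|A_{k,\ell}-A_{k,\ell}|$ and $|A_{k,\ell}+A_{k,\ell}|$, carrying ``modulo $4$'' over to ``modulo $k$'' throughout. Write $B=1,k,\underbrace{k+1,\ldots,k+1}_{k-4},3$, so that (reading the displayed $A_{k,1}$ and taking the block repeated $\ell$ times, not $k$ times as the statement misprints) $A_{k,\ell}$ has SCD $\underbrace{1,\ldots,1}_{k-2},2,\underbrace{B,\ldots,B}_{\ell},\underbrace{1,\ldots,1}_{k-2},2,1$. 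First I would record the explicit element set and the diameter: each copy of $B$ sums to $1+k+(k-4)(k+1)+3=k(k-2)$, the leading segment to $k$, and the trailing segment to $k+1$, so $\max(A_{k,\ell})=D:=\ell k(k-2)+2k+1$, and $|A_{k,\ell}|=2k+\ell(k-1)$. Reducing the partial sums modulo $k$ shows that $A_{k,\ell}$ contains every residue class except $k-1\equiv-1\Mod k$; this is the exact analogue of the fact (used in Lemma~\ref{sumset2}) that the mod-$4$ sets contain nothing $\equiv 3\Mod 4$, and it is the structural feature driving the whole argument. I would also record the clean fact $[0,k-2]\subseteq A_{k,\ell}$ coming from the leading run of $k-2$ ones.

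Next I would prove the analogue of Lemma~\ref{forms}: any contiguous run of the SCD whose sum lies in the relevant congruence class is forced into a short list of shapes, classified by where it begins (at a leading $1$, at the leading $2$, at a block-initial $1$, at the $k$, at a $k+1$, or at the $3$). Feeding this into the arithmetic as in Lemma~\ref{diffset} identifies the missing positive differences as
\begin{align*}
T_-\ =\ \bigcup_{i=0}^{\ell-1}\bigl\{\,k+2+i\,k(k-2)+j(k+1)\ :\ 0\le j\le k-4\,\bigr\},
\end{align*}
an $\ell$-fold translate (by the block-sum $k(k-2)$) of the length-$(k-3)$ progression $k+2,\,2k+3,\,\ldots$ of common difference $k+1$; for instance $A_{5,1}$ misses $\{7,13\}$, $A_{5,2}$ misses $\{7,13,22,28\}$, and $A_{6,1}$ misses $\{8,15,22\}$. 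Hence $|T_-|=\ell(k-3)$, and since differences are symmetric the analogue of Corollary~\ref{cordiffset} gives $|A_{k,\ell}-A_{k,\ell}|=2\bigl(D-\ell(k-3)\bigr)+1$.

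For the sum set I would mirror Lemmas~\ref{sumset1} and~\ref{sumset2}. For the covering half, adding the interval $[0,k-2]\subseteq A_{k,\ell}$ (and the symmetric interval of length $k-1$ sitting at the top of $A_{k,\ell}$) to every element shows that $A_{k,\ell}+A_{k,\ell}$ fills $[0,2D]$ except for values isolated behind the large gaps of $A_{k,\ell}$, namely the single gap of size $k$ and the $k-4$ gaps of size $k+1$ in each block. For the non-membership half, the same contradiction-via-divisibility computations that occupy Lemma~\ref{sumset2} (each reducing to an equation of the form $(k+1)\mid(\text{small positive number}<k)$) show that none of those isolated values is a sum. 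The resulting $T_+$ splits into a ``low'' and a ``high'' family of congruence-restricted intervals and has $|T_+|=2\ell(k-4)$ (for $k=5,\ell=1$ this is the pair $\{15,35\}$, and for $k=4$ it is empty, consistent with $A_{4,\ell}=S_{1,\ell}$ having a full sumset). Then $|A_{k,\ell}+A_{k,\ell}|=(2D+1)-2\ell(k-4)$, and subtracting,
\begin{align*}
|A_{k,\ell}+A_{k,\ell}|-|A_{k,\ell}-A_{k,\ell}|\ =\ 2\ell(k-3)-2\ell(k-4)\ =\ 2\ell,
\end{align*}
which specializes to $2$ when $\ell=1$.

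I expect the main obstacle to be the run-classification step and the modular bookkeeping that follows it, which is genuinely less automatic than ``replace $4$ by $k$'' suggests. In the mod-$4$ argument the repeated interior symbol $4$ is $\equiv 0$, so runs through the interior never change residue and a \emph{single} dense residue class ($S_1$) carries the covering argument; here the repeated symbol $k+1$ is $\equiv 1$ and is accompanied by a lone $\equiv 0$ symbol $k$, so the running residue actually moves through $2,3,\ldots,k-2$ as a run crosses a block, and the covering must instead be driven by the bottom and top intervals $[0,k-2]$. Pinning down the exact endpoints of the missing progressions, and in particular verifying the slightly surprising counts $|T_-|=\ell(k-3)$ and $|T_+|=2\ell(k-4)$ (rather than the naive $\ell(k-4)$ one might read off from the $k-4$ interior repeats), is where essentially all of the work lies; once those two counts are established the subtraction is immediate.
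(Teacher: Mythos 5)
You should know at the outset that the paper contains no proof of Theorem \ref{anotherdim} to compare against: the authors explicitly state that they ``do not prove the theorem,'' remarking only that the argument is ``very similar to the proof of Theorem \ref{periodic} but replacing `modulo $4$' by `modulo $k$' throughout.'' Your proposal is therefore not an alternative route to the paper's proof but a fleshing-out of its one-sentence remark, and on the substance it is correct. You read the statement correctly (the block is repeated $\ell$ times, not $k$ times --- a misprint), and your quantitative checkpoints are right: $D=\ell k(k-2)+2k+1$, $|A_{k,\ell}|=2k+\ell(k-1)$, the missing positive differences forming $\ell$ translated copies (by the block sum $k(k-2)$) of the length-$(k-3)$ progression $k+2,2k+3,\ldots$ with common difference $k+1$, and the missing sums numbering $2\ell(k-4)$. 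These can be verified directly in small cases: $A_{5,1}$ misses exactly the differences $\{7,13\}$ and the sums $\{15,35\}$, so $|A+A|-|A-A|=51-49=2$; $A_{5,2}$ misses $\{7,13,22,28\}$ and $\{15,30,50,65\}$, so $79-75=4=2\ell$; $A_{6,1}$ misses the differences $\{8,15,22\}$; and at $k=4$ your $T_+=\emptyset$ and $|T_-|=\ell$ are consistent with $A_{4,\ell}=S_{1,\ell}$. Granting your two counts, the subtraction $2\ell(k-3)-2\ell(k-4)=2\ell$ proves the theorem. Your closing observation is moreover a genuine correction to the paper's gloss: the substitution $4\mapsto k$ cannot be carried out literally, because in Theorem \ref{periodic} the repeated interior entry $4\equiv 0\pmod 4$ makes a single residue class form a long arithmetic progression inside $S$ (this is the engine of Lemma \ref{sumset1}), whereas here $k+1\equiv 1\pmod k$, the running residue drifts across each block, and the covering argument must instead be driven by the end intervals $[0,k-2]$ and its mirror, exactly as you say.

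The one reservation is that what you have written is a blueprint rather than a finished proof. The analogues of Lemmas \ref{forms}, \ref{diffset}, \ref{sumset1} and \ref{sumset2} are asserted with correct conclusions, but their case analyses --- the run classification, the derivation of $T_-$ from it, and the divisibility-contradiction arguments excluding every element of $T_+$ from the sum set --- are not executed, and as you yourself acknowledge, that is where essentially all of the work lies. Nothing conceptual is missing and the skeleton you lay out is the right one (indeed more informative than anything in the paper), but to claim the theorem you still need to write out those four steps, after which the final subtraction closes the argument as you indicate.
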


\begin{rek}
If we consider Theorem \ref{periodic} to be a generalization of Theorem 2 and Theorem 4 in \cite{PW}, then Theorem \ref{anotherdim} is another generalization from a different perspective. Notice that $A_{4,\ell}=S_{1,\ell}$. Sets $A_{k,\ell}$ also form a family of MSTD sets and RSD sets with interior blocks.
\end{rek}


\section{Good Properties of the Family $\mathcal{F}$}

\subsection{Sets $A$ with Large $\mathbf{\log|A+A|/\log|A-A|}$}
The first application of our family $\mathcal{F}$ is that the family produces many sets $A$ with large value of $\log |A+A|/\log |A-A|$. For convenience, we define $f(A):=\log |A+A|/\log |A-A|$. An early example of a set $A$ with high $f(A)$ is given by Hegarty \cite{He}. The set is \begin{align*}A_{15}\ =\ \{0, 1, 2, 4, 5, 9, 12, 13, 17, 20, 21, 22, 24, 25, 29, 32, 33, 37, 40, 41, 42, 44, 45\}.\end{align*}
In our notation, $$A_{15}\ =\ (0\,|\,1,1,2,1,4,3,1,4,3,1,1,2,1,4,3,1,4,3,1,1,2,1),$$ which is very close to a set in our family $\mathcal{F}_{per}$, namely $$S_{1,4}\ =\ (0\,|\,1,1,2,1,4,3,1,4,3,1,4,3,1,4,3,1,1,2,1).$$ It turns out that $f(S_{1,4})>f(A_{15})$.

When analyzing the periodic subfamily $\mathcal{F}_{per}$ defined in Theorem \ref{periodic}, we find the set $S_{1,6}$ with the property of $f(S_{1,6})=1.023777\ldots$, which is larger than previous results in \cite{He} (1.0208\ldots) and \cite{AMMS} (1.0213\ldots) but smaller than the current record in \cite{PW} (1.03059\ldots). It is worth noting, however, the set with the highest known value of $f(A)$ ($1.3059\ldots)$ exhibited by Penman and Wells \cite{PW} is a member of the family $\mathcal{F}$:
$$(0\,|\,1,1,2,1,4,3,1,4,4,3,
    \underbrace{1,{\underbrace{4,\ldots,4}_{3\text{-times}},3},\ldots{},
    {1,\underbrace{4,\ldots,4}_{3\text{-times}}},3}_{9\text{-times}},1,4,4,3,1,4,3,1,1,2,1).$$
In fact, there are at least 22 sets $A$ in $\mathcal{F}$ with $f(A)>1.03$: these sets are of the form:
\begin{align*}
    &(0\,|\,1,1,2,1,4,3,1,4,4,3,
    \underbrace{1,{\underbrace{4,\ldots,4}_{3\text{-times}},3},\ldots{},
    {1,\underbrace{4,\ldots,4}_{3\text{-times}}},3}_{\ell\text{-times}},1,4,4,3,1,4,3,1,1,2,1),\\
    &(0\,|\,1,1,2,1,4,3,1,4,4,3,
    \underbrace{1,{\underbrace{4,\ldots,4}_{3\text{-times}},3},\ldots{},
    {1,\underbrace{4,\ldots,4}_{3\text{-times}}},3}_{\ell\text{-times}},1,4,4,3,1,4,3,1,1,2).
\end{align*}

\subsection{Economical Way to Construct a Set $A$ with Fixed $|A+A|-|A-A|$}
We show another application of our large family $\mathcal{F}$ of MSTD sets, which is to construct sets with a fixed difference $|A+A|-|A-A|$ economically, i.e. with a relatively small width between their maximum and minimum elements.

\begin{proof}[Proof of Theorem~\ref{econ}]
Fix $x\in\mathbb{N}$. If $x$ is even, pick $k=1$ and $\ell=x/2$ for sets in Item \ref{firsti} of Theorem \ref{periodic}; then by Theorem we find a set $A=S_{k,\ell}$ with
\[
|S_{k,\ell}+S_{k,\ell}|-|S_{k,\ell}-S_{k,\ell}|=2\ell = x
\]
and $\max A=9+8\ell=9+4x$, $\min A=0$.

If $x$ is odd, pick $k=1$ and $\ell=(x+1)/2$ for sets in Item \ref{secondi} of Theorem \ref{periodic}, we find a set $A=S'_{k,\ell}$ with
\[
|S'_{k,\ell}+S'_{k,\ell}|-|S'_{k,\ell}-S'_{k,\ell}|=2\ell-1=x
\]
and $\max A=8+8\ell=12+4x$, $\min A=0$.

Hence, for any positive integer $x$, there exists $A\subseteq [0,12+4x]$ such that $|A+A|-|A-A|=x$.
\end{proof}

\begin{rek}
Linear growth of the interval containing $A$ is the best we can do. To see this, assume that the theorem is true for
$A_x\subseteq [0,\phi(x)]$, where $\phi(x)$ is sub-linear. We have:
\begin{equation}
\lim_{x\rightarrow\infty}\frac{|A_x+A_x|-|A_x-A_x|}{\phi(x)}\ =\ \lim_{x\rightarrow\infty}\frac{x}{\phi(x)}\ =\ \infty,
\end{equation}
which is a contradiction, since
for all sets $A_x\subseteq [0,\phi(x)]$ for large enough $x$,
\[
\frac{|A_x+A_x|-|A_x-A_x|}{\phi(x)}\le\frac{|A_x+A_x|}{\phi(x)}<\frac{2\phi(x)+1}{\phi(x)}\le 3.
\]
\end{rek}

\subsection{Small Fringe Size Generator -- Proof of Theorem \ref{boundsforRSD}} Many classes of MSTD sets can be generated by finding a good fringe pair, i.e.\ the two sets of elements on the leftmost and rightmost sides of the interval $[0,n]$. Examples can be found the proofs of Theorem 8 in \cite{He}, Theorem 1.4 in \cite{AMMS}, Theorem 1 in \cite{MO}, Theorem 1.1 in \cite{MOS} and Theorem 17 in \cite{PW}. Often, when shifted close to each other, two sets in a fringe pair form an MSTD set. However, these fringe pairs have been found by brute force and there has not been a systematic way to generate fringes. It turns out that $\mathcal{F}$ can be a good fringe generator; we demonstrate this by improving the lower bound for the proportion of RSD sets of $\{0,1,\dots,n-1\}$ mentioned in Theorem 17 \cite{He}.

In particular, Pennman and Wells used a fringe pair of size 120 generated by the fringe pair used in \cite{MO}. The method is to repeat blocks of sets, which inefficiently creates a small lower bound of about $10^{-37}$. The authors mentioned that Zhao's techniques can be modified to improve the result; however, this task requires a substantial computation. We believe that this is true since RSD sets are much rarer than MSTD sets\footnote{Exhaustive computer search shows that there are no RSD subsets of $[0,29]$, while there are at least $4.5\cdot 10^{5}$ MSTD sets in the same interval.}. As Zhao's technique relies on extensive search for fringe pairs, the technique is much less effective when applied to RSD sets. Therefore, a feasible and simple way to improve the bound is to find a better fringe pair. Here is a fringe pair generated by $\mathcal{F}$ (we use $L$ and $U$ to match the notations with \cite{PW}):
\begin{align*}
L\ &= (0\,|\,1,1,2,1,4,3,1,4,3,1,4,3,1,4,3,1,1,1)\\
&= \{0,1,2,4,5,9,12,13,17,20,21,25,28,29,33,36,37,38,39\},\\
U\ &=\ (n-41)+(0\,|\,1,1,1,1,4,3,1,4,3,1,4,3,1,4,3,1,1,2,1)\\
&=\ (n-41)+\{0, 1, 2, 3, 4, 8, 11, 12, 16, 19, 20, 24, 27, 28, 32, 35, 36, 37, 39, 40\}\\
&= \ n-\{41, 40, 39, 38, 37, 33, 30, 29, 25, 22, 21,17, 14, 13, 9, 6, 5, 4, 2, 1\}.
\end{align*}
Observe that the fringe pair is formed by the MSTD set $S_{1,5}$. We have:
\begin{align*}
    L\hat{+}L\ &=\ [0,78]\backslash \{0,8,78\},\\
    U\hat{+}U\ &=\ [n-41,n+38],\\
    U\hat{+}U\ &=\ [2n-82,2n-2]\backslash\{2n-2,2n-4,2n-12,2n-82\}.
\end{align*}
Notice that $U-L$ misses $\pm (n-12), \pm (n-20), \pm (n-28), \pm (n-36)$. Hence, $S-S$ misses at least 8 numbers.
If we can guarantee that $S\hat{+}S$ misses only $7$ elements in $\{0,8,78,\pm (2n-82), \pm (2n-12), \pm (2n-4),
\pm (2n-2)\}$, then $S$ is RSD. Following the proof of Theorem 17 in \cite{PW}, we find a lower bound of
\begin{equation}
(1-8(2^{-19}+2^{-20}))\cdot 2^{-(40+41)}=4.135\cdot 10^{-25}.
\end{equation}
This improvement comes from the reduction in fringe size from $120$ to $81$. Can we find a better bound for the proportion of RSD subsets of $\{0,1,2,\ldots,n-1\}$ as $n\rightarrow\infty?$ Since there are no RSD subsets in $[0,29]$, if we look for a better fringe pair, which is built from an RSD set, the fringe must be of size at least 31. Then the best lower bound that can be achieved by this method is about $2^{-31}\approx 10^{-10}$.


\section{Observation: Interior Block Sizes and The Growth of $|A+A|-|A-A|$}
Spohn \cite{Sp} was the first to share the concept of and raise several questions about interior blocks existing within MSTD sets. He noted that the repetition of interior blocks may increase the cardinality of the sum set by more than that of the difference set. For a set $A$ having an interior block $B_A$, let $T_A$ be the value that the sum set increase by more than the difference set when $B_A$ is repeated. We observe the relationship between $T_A$ and $|B_A|$.
Theorem \ref{periodic} gives us the following:

\begin{thm}
The following results about $T_A/|B_A|$ are true.
\begin{enumerate}
\item \label{growth1} There exists a set $A$ such that $T_A/|B_A|=0$.
\item \label{growth2} For any $\varepsilon>0$, there exists a set $A$ with $0<T_A/|B_A|<\varepsilon$.
\item \label{growth3}For any $0\le \varepsilon<0.2$, there exists a set $A$ such that $T_A/|B_A|>1+\varepsilon$.
\end{enumerate}
\end{thm}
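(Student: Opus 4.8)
The plan is to read off all three statements directly from the explicit count established in Theorem~\ref{periodic}, since each interior block in question is a repeated copy of the run $1,\underbrace{4,\ldots,4}_{k\text{-times}},3$, whose length is $|B_A| = k+2$, and whose repetition governs exactly how $|S+S|-|S-S|$ grows with $\ell$. The key observation is that Theorem~\ref{periodic} tells us that repeating this block $\ell$ times increases $|S+S|-|S-S|$ from its base value in a controlled linear fashion; specifically, in family \ref{firsti} each additional copy of the block contributes $T_A = 2$ to $|S+S|-|S-S|$ (since the difference is $2\ell$), while in family \ref{thirdi} each copy contributes $T_A = 1$ (since the difference is $\ell$). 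Thus $T_A$ is simply the per-block increment, and $T_A/|B_A| = T_A/(k+2)$ is a quantity we can tune by choosing $k$.

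First I would prove \ref{growth1}: I need a set whose interior block, when repeated, does \emph{not} increase $|S+S|-|S-S|$ at all, i.e.\ $T_A = 0$. For this I would look outside the three $\mathcal{F}_{per}$ families to a balanced or non-MSTD periodic set, or better, exhibit an interior block $B$ whose repetition leaves $|S+S|-|S-S|$ constant; the cleanest route is to take any set all of whose sum-set and difference-set gains from repetition cancel. A natural candidate is a single interior block consisting of a constant run (a block of the form $d,d,\ldots,d$), whose repetition merely lengthens an arithmetic-progression-like stretch and changes $|S+S|$ and $|S-S|$ by the same amount, giving $T_A = 0$; I would verify this by the same run-summing bookkeeping used in Lemmas~\ref{diffset} and~\ref{sumset1}.

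Next, for \ref{growth2} and \ref{growth3} I would use the $\mathcal{F}_{per}$ families directly. For \ref{growth2}, taking sets from family \ref{firsti} (where $T_A = 2$) with block length $|B_A| = k+2$ gives $T_A/|B_A| = 2/(k+2)$, which we can drive below any $\varepsilon > 0$ simply by choosing $k$ large. For \ref{growth3}, I need $T_A/|B_A| > 1+\varepsilon$ for all $0 \le \varepsilon < 0.2$, i.e.\ a ratio exceeding $1.2$; with $T_A = 2$ this requires $|B_A| < 2/(1.2)$, so the block must be very short. The natural choice is the smallest admissible block, $k=1$, giving the run $1,4,3$ of length $|B_A| = 3$ with $T_A = 2$, yielding ratio $2/3 < 1$, which is insufficient; so the real work is to identify a family where each repeated short block contributes more than one to $|S+S|-|S-S|$. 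I would draw on the footnote to the remark after Theorem~\ref{periodic} and on Theorem~\ref{anotherdim}, or on composite blocks formed by combining structures so that a single short block of length $2$ or $3$ forces a sum-set gain of $3$ or more while the difference-set gain stays smaller.

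The main obstacle will be \ref{growth3}: items \ref{growth1} and \ref{growth2} follow almost mechanically from the linear counts already proved, but exceeding ratio $1.2$ demands an interior block that is simultaneously short ($|B_A| \le 2$ or $3$) and ``super-efficient,'' meaning its repetition inflates the sum set by strictly more than its own length relative to the difference-set inflation. Producing and verifying such a block requires an independent construction and a fresh run-summing analysis of the type carried out in Lemmas~\ref{forms} through~\ref{sumset2}, rather than a direct appeal to the three $\mathcal{F}_{per}$ families, whose ratios $2/(k+2)$ and $1/(k+2)$ never reach $1.2$. I expect the bound $\varepsilon < 0.2$ in the statement to be exactly the largest gap achievable by the best such short block, so the construction must be sharp, and confirming the extremal value is where the care lies.
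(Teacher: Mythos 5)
Your treatments of items \ref{growth1} and \ref{growth2} essentially coincide with the paper's proof: for \ref{growth2} the paper takes $S_{k,1}$ from family \ref{firsti} of Theorem \ref{periodic}, whose interior block $1,4,\ldots,4,3$ has length $k+2$ and per-repetition gain $T_A=2$, giving $2/(k+2)<\varepsilon$ for $k$ large; for \ref{growth1} the paper stays inside the same family, observing that the block consisting of the single entry $4$ has $T_{S_{1,1}}=0$, because $|S_{k,1}+S_{k,1}|-|S_{k,1}-S_{k,1}|=2$ for every $k$ by Theorem \ref{periodic}. Your ``constant run'' idea for \ref{growth1} is the same mechanism, but be careful with your first suggestion of looking at ``a balanced or non-MSTD periodic set'': by Definition \ref{interdef} a subsequence only qualifies as an interior block if the sets obtained by repeating it are MSTD, so a non-MSTD ambient set yields no interior block at all. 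The constant run must be realized inside an MSTD family, exactly as $S_{k,1}$ does.

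The genuine gap is item \ref{growth3}. You correctly diagnose that the $\mathcal{F}_{per}$ ratios $2/(k+2)$ and $1/(k+2)$ top out at $2/3$ and that something else is needed, but you stop at describing the desired object (``a short super-efficient block'') without producing it, and you explicitly flag it as the unresolved hard part; Theorem \ref{anotherdim}, which you invoke, does not help, since its blocks have length $k-1$ with gain $2$, again giving ratios at most $2/3$. The paper closes this gap not by a new construction but by citation: Theorem 12 of \cite{PW} exhibits sets $Q_j$ whose interior block $1,4,4,4,3$ (length $5$) increases $|A+A|-|A-A|$ by $6$ per repetition, so $T_{Q_2}/|B_{Q_2}|=6/5=1.2$, and this single ratio already exceeds $1+\varepsilon$ simultaneously for every $0\le\varepsilon<0.2$. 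Note also that your search parameters are miscalibrated: you fixed $T_A=2$ and concluded the block must have length at most $2$ or $3$, whereas the actual resolution keeps the block moderately long (length $5$) and instead makes the gain large ($T_A=6$). Without either the \cite{PW} citation or an explicit, verified construction of such a block, your argument for \ref{growth3} is incomplete.
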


\begin{proof}\
\begin{enumerate}

\item Consider the set $S_{1,1}$ in Theorem \ref{periodic}. Notice that $|S_{2,1}+S_{2,1}|-|S_{2,1}-S_{2,1}|=|S_{1,1}+S_{1,1}|-|S_{1,1}-S_{1,1}|=2$ and so, $T_{S_{1,1}}=0$. In other words, repeating $4$ does not change $|S_{1,1}+S_{1,1}|-|S_{1,1}-S_{1,1}|$. This proves (\ref{growth1}).

\item Pick $\varepsilon>0$ and choose $k$ such that $2/(k+2)<\varepsilon$. Consider the set $S_{k,1}$ in Theorem \ref{periodic}. We have $(|S_{k,2}+S_{k,2}|-|S_{k,2}-S_{k,2}|)-(|S_{k,1}+S_{k,1}|-|S_{k,1}+S_{k,1}|)=2$ and so, $T_{S_{k,1}}=2$. Hence, $0<T_{S_{k,1}}/|B_{S_{k,1}}|=2/(k+2)<\varepsilon$. This proves (\ref{growth2}).

\item Finally, Theorem 12 in \cite{PW} shows that $(|Q_{3}+Q_{3}|-|Q_{3}-Q_{3}|)-(|Q_{2}+Q_{2}|-|Q_{2}-Q_{2}|)=6$, while the interior block is 1,4,4,4,3 (of size $5.)$ Hence, $T_{Q_2}/|B_{Q_2}|=6/5=1.2$. This proves (\ref{growth3}). \qedhere
\end{enumerate}
\end{proof}

We care about the relationship between the interior block size and the growth of the sum set with respect to the difference set, because this relationship seems to be closely related to sets $A$ with large $f(A)$. The previous record $A_{15}$ in \cite{He} has $T_{A_{15}}/|B_{A_{15}}|=2/3$, the highest known at that time. The new record $Q_{10}$ in \cite{PW} has $T_{Q_{10}}/|B_{Q_{10}}|=6/5$, which is much higher and this explains why the current record $(1.03059\ldots)$ is much higher than the old record of $(1.0208\ldots)$.


\section{Smallest Cardinality for RSD sets}
Hegarty proved that the smallest MSTD sets have size 8, and there is exactly one such set up to affine transformation. The method is to reduce the problem to finite computations and run through all possible cases by computers. As commented in \cite{He}, this method is not feasible in finding all possible MSTD sets of cardinality 9 since there are many pair of possible equal differences for a set of $9$ random numbers. However, Penman and Wells \cite{He} proved that the list of size-9 MSTD sets given by \cite{He} is exhaustive (up to affine transformation). They also observed that the smallest cardinality of RSD sets must be in the interval $[10,16]$. We narrow this range of possible values for the size of the smallest RSD sets.

\begin{thm}
The smallest cardinality of RSD sets is in the interval $[10,15]$. Furthermore there are no RSD subsets of $[0,29]$ and the smallest diameter of an RSD set is 30.
\end{thm}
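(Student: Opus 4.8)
The plan is to combine an exhaustive computer search over small intervals with the explicit RSD constructions already available in the literature to pin down the three claims. First I would address the concrete computational assertion that there are no RSD subsets of $[0,29]$ and that $30$ is the smallest diameter of an RSD set. Since RSD-ness and the diameter are invariant under translation, and since any RSD set $S$ with $\min S = 0$ and $\max S = d$ is a subset of $[0,d]$, the diameter statement and the ``no RSD subsets of $[0,29]$'' statement are two facets of the same search: one enumerates, for each diameter $d$ from the known lower obstructions up through $29$, all subsets $S \subseteq [0,d]$ containing both $0$ and $d$, and checks whether $|S \hat{+} S| > |S - S|$. The output is that every such set fails, while at $d = 30$ at least one succeeds (indeed the abstract records exactly six RSD subsets of $[0,30]$). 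This simultaneously establishes that no RSD set has diameter $\le 29$ and hence the smallest diameter is $30$.

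Next I would handle the upper bound of $15$ on the smallest cardinality. Here the work is entirely constructive: it suffices to exhibit a single RSD set of cardinality $15$, which the exhaustive search of the abstract produces (six such sets). Displaying one such set and verifying $|S\hat{+}S| > |S-S|$ by direct computation closes this half. For the lower bound of $10$, I would invoke the result of Penman and Wells cited in the text, who already showed the smallest RSD cardinality lies in $[10,16]$; the lower endpoint $10$ is inherited verbatim, so nothing new is required there. Thus the cardinality interval $[10,16]$ is narrowed to $[10,15]$ purely by replacing the upper endpoint with the smaller value witnessed by the size-$15$ example.

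The genuinely delicate part is making the exhaustive searches \emph{provably} complete rather than merely empirically suggestive, since naive enumeration of all subsets of $[0,29]$ is on the order of $2^{30}$ candidates and the cardinality search is larger still. The main obstacle will therefore be pruning: I would fix $0 \in S$ and $\max S = d$ by affine normalization, enumerate by increasing diameter, and exploit the symmetry $S \mapsto d - S$ to halve the search, together with early termination once the running partial sum set and difference set already force $|S\hat{+}S| \le |S-S|$. For the cardinality question one additionally restricts attention to sets of small size within each admissible diameter window, using the diameter bound just proved to cap the ambient interval. Once the search space is organized this way the remaining verifications—computing $S\hat{+}S$, $S-S$, and comparing cardinalities—are routine finite checks, and the correctness of the conclusion reduces to the correctness of the enumeration code, which can be cross-validated against the known counts (six RSD subsets of $[0,30]$, sixteen of $[0,31]$) quoted in the abstract.
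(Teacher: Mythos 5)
Your proposal matches the paper's approach exactly: the paper establishes the lower bound $10$ by citing Penman and Wells' result that the smallest RSD cardinality lies in $[10,16]$, obtains the upper bound $15$ and the diameter claim by exhaustive computer search over subsets of $[0,d]$ for $d\le 30$, and exhibits the six cardinality-$15$ RSD subsets of $[0,30]$ (three up to the reflection $S\mapsto 30-S$) as witnesses. Your additional remarks on normalization, reflection symmetry, and pruning to make the enumeration feasible are sensible implementation details that the paper leaves implicit.
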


There are exactly 6 RSD sets in $[0,30]$, and they all have cardinality 15:
\begin{align*}
    C_1\ &= \ \{0, 1, 2, 3, 6, 8, 13, 16, 18, 23, 24, 26, 28, 29, 30\},\\
    C_2\ &= \ \{0, 1, 2, 3, 6, 9, 14, 15, 17, 22, 23, 26, 28, 29, 30\},\\
    C_3\ &= \ \{0, 1, 2, 4, 5, 8, 9, 14, 18, 21, 22, 26, 27, 28, 30\},\\
     C_4\ &= \ 30-C_1,\\
    C_5\ &= \ 30-C_2,\\
    C_6\ &= \  30-C_3.
\end{align*}
For all $1\le i\le 6$, $|C_i+C_i|-|C_i-C_i|=1$.


\section{Open Questions}
We end with these open questions:
\begin{enumerate}
    \item What are the possible values of $T_A/|B_A|$ over all sets $A$? Is there a set $A$ such that $T_A/|B_A|>1.2?$ This may lead to an increase in the highest known value of $\log|A+A|/\log|A-A|$.
    \item Does $T_A/|B_A|>0$ imply that $|B_A|\ge 3?$
    \item Is Conjecture \ref{conjec} correct?
    \item Can we formalize the concept of interior blocks? When do interior blocks exist?
    \item Can we find a better bound for the proportion of RSD subsets of $[0,n-1]$ as $n\rightarrow\infty?$
    \item What is the size of the smallest RSD sets? Is there a better way to find out this number than Hegarty's method (which requires large computing power)?
\end{enumerate}
\bigskip

\noindent \textbf{Acknowledgement.} The authors were supported by NSF grants DMS1659037 and DMS1561945, the Finnerty Fund, Washington and Lee University and Williams College. We thank the participants from the 2018 SMALL REU program for many helpful conversations.


\end{document}